\documentclass[a4paper,english,nolineno]{socg-lipics-v2019}
% \documentclass[a4paper,english]{socg-lipics-v2019}
%This is a template for producing LIPIcs articles.
%See lipics-manual.pdf for further information.
%for A4 paper format use option "a4paper", for US-letter use option "letterpaper"
%for british hyphenation rules use option "UKenglish", for american hyphenation rules use option "USenglish"
% for section-numbered lemmas etc., use "numberwithinsect"

\usepackage{microtype}%if unwanted, comment out or use option "draft"
\usepackage{amsthm}
\usepackage{amsmath}
\usepackage{mathbbol}
\usepackage{amsfonts}
\usepackage{amssymb}
\usepackage{graphicx}
\usepackage{algorithm}
\usepackage{url}
%\usepackage{floatflt}
%\usepackage{mathptmx}
% \usepackage{epsfig}
% \usepackage{wrapfig}
% \usepackage{color}

% \usepackage{lineno}
% \linenumbers

% \usepackage{subfig}

\usepackage{algorithm}
\usepackage{algpseudocode}
\usepackage{comment}
\usepackage[small,nohug,heads=vee]{diagrams}
\diagramstyle[labelstyle=\scriptstyle]

\usepackage{todonotes}

\newcommand{\N}{\mathbb{N}}

\newcommand{\R}{\mathbb{R}}

\newcommand{\F}{\mathbb{F}}

\newcommand{\eps}{\epsilon}
\newcommand{\id}{\mathrm{id}}

\newcommand{\ignore}[1]{}

\renewcommand{\dim}{\mathrm{dim}}
\renewcommand{\ker}{\mathrm{ker}}
\DeclareMathOperator{\Ima}{Im}
\newcommand{\coker}{\mathrm{coker}}

%\graphicspath{{./graphics/}}%helpful if your graphic files are in another directory

\bibliographystyle{plainurl}% the recommnded bibstyle

\title{Chunk Reduction for Multi-Parameter Persistent Homology}

\titlerunning{Chunk Reduction for Multi-Parameter Persistent Homology}%optional, please use if title is longer than one line

\author{Ulderico Fugacci}{Graz University of Technology, Graz, Austria}{fugacci@tugraz.at}{https://orcid.org/0000-0003-3062-997X}{}%mandatory, please use full name; only 1 author per \author macro; first two parameters are mandatory, other parameters can be empty.

\author{Michael Kerber}{Graz University of Technology, Graz, Austria}{kerber@tugraz.at}{https://orcid.org/0000-0002-8030-9299}{}

\authorrunning{U. Fugacci and M. Kerber}%mandatory. First: Use abbreviated first/middle names. Second (only in severe cases): Use first author plus 'et. al.'

\Copyright{Ulderico Fugacci and Michael Kerber}%mandatory, please use full first names. LIPIcs license is "CC-BY";  http://creativecommons.org/licenses/by/3.0/

\ccsdesc[500]{Theory of computation~Randomness, geometry and discrete structures}
\ccsdesc[500]{Computing methodologies~Shared memory algorithms}% mandatory: Please choose ACM 2012 classifications from https://www.acm.org/publications/class-2012 or https://dl.acm.org/ccs/ccs_flat.cfm . E.g., cite as "General and reference $\rightarrow$ General literature" or \ccsdesc[100]{General and reference~General literature}.

\keywords{Multi-parameter persistent homology, Matrix reduction, Chain complexes}%mandatory

\category{}%optional, e.g. invited paper

\relatedversion{}%optional, e.g. full version hosted on arXiv, HAL, or other respository/website

\supplement{}%optional, e.g. related research data, source code, ... hosted on a repository like zenodo, figshare, GitHub, ...

\funding{Supported by the Austrian Science Fund (FWF) grant number P 29984-N35.}%optional, to capture a funding statement, which applies to all authors. Please enter author specific funding statements as fifth argument of the \author macro.

\acknowledgements{We thank Sara Scaramuccia for initial discussions on this project, Wojciech Chacholski, Michael Lesnick and Francesco Vaccarino for helpful suggestions,
and Federico Iuricich for his help on the experimental comparison
with~\cite{Iuricich2018}.
The datasets used in the experimental evaluation are courtesy of the AIM@SHAPE data repository \cite{aimatshape2006}.}%optional

\EventEditors{Gill Barequet and Yusu Wang}
\EventNoEds{2}
\EventLongTitle{35th International Symposium on Computational Geometry (SoCG 2019)}
\EventShortTitle{SoCG 2019}
\EventAcronym{SoCG}
\EventYear{2019}
\EventDate{June 18--21, 2019}
\EventLocation{Portland, United~States}
\EventLogo{socg-logo}
\SeriesVolume{129}
\ArticleNo{37} % <- Your article number goes here

\hideLIPIcs

\begin{document}

\maketitle

\begin{abstract}
The extension of persistent homology to multi-parameter setups is an algorithmic challenge. Since most computation tasks scale badly with the size of the input complex, an important pre-processing step consists of simplifying the input while maintaining the homological information. We present an algorithm that drastically reduces the size of an input. Our approach is an extension of the chunk algorithm for persistent homology (Bauer et al., Topological Methods in Data Analysis and Visualization III, 2014).
We show that our construction produces the smallest multi-filtered chain complex among all the complexes quasi-isomorphic to the input, improving on the guarantees of previous work in the context of discrete Morse theory.
Our algorithm also offers an immediate parallelization scheme in shared memory. Already its sequential version compares favorably with existing simplification schemes, as we show by experimental evaluation.
\end{abstract}

\section{Introduction}\label{sec:intro}
In the last decades, topology-based tools are gaining a more and more relevant role in the analysis and in the extraction of the core information of unorganized, high-dimensional and potentially large datasets.
Thanks to its capability of keeping track of the changes in the homological features of a dataset which evolves with respect to a parameter, {\em persistent homology} has represented a real game-changer in this field. Recently, an extension of persistent homology called {\em multi-parameter persistent homology} is drawing the attention of a growing number of researchers.
In a nutshell, multi-parameter persistence generalizes the classic persistent homology by studying multivariate datasets which are filtered by two or more (independent) scale parameters.
Multi-parameter persistent homology of a dataset cannot be captured by complete discrete invariant~\cite{Carlsson2009}, but this has not prevented the researchers from defining several descriptors based on multi-parameter persistence \cite{Cerri2013, Knudson2008, Miller2017}.
Since these descriptors tend to have high algorithmic complexity,
it is a natural pre-processing step to pass to a smaller
but equivalent representation of the input complex
and to invoke any demanding computation on this smaller representation.

\subparagraph{Contribution.}
Inspired by the chunk algorithm \cite{Bauer2014} for persistent homology, we present a reduction algorithm which for a filtered dataset returns a filtered chain complex having the same multi-parameter persistence but a drastically smaller size.
Our approach is based on the observation that even in the absence
of a global persistence diagram, local matrix reductions yield
pairs of simplices which can be eliminated from the boundary matrix
without affecting the homological information.
Our approach proceeds in two steps, first identifying such local pairs
of simplices, and in a second step manipulating the non-local columns
of the boundary matrix such that all indices of locally paired simplices
disappear. Both steps permit a parallel implementation
with shared memory.
We prove that our simplification scheme is optimal in the sense that
any chain complex quasi-isomorphic to the input complex
must contain at least as many generators as our output complex.
Our algorithm yields similar time and space complexity bounds
as its one-parameter counterpart.
We implemented our algorithm, making use of various techniques
that have proven effective for persistence computation, such as
the twist reduction~\cite{Chen2011} and efficient data structure
for the columns of a boundary matrix~\cite{Bauer2017}.
We experimentally show that our implementation is effective (see next
paragraph).
For the sake of clarity, our approach is described for the two-parameter case. No constraint prevents the generalization of the proposed method to an arbitrary number of parameters.

\subparagraph{Comparison with related work.}
Our work is motivated by a line of research on complex simplification
using discrete Morse theory (DMT)~\cite{Iuricich2016, Allili2017, Allili2018, Scaramuccia2018}.
In these works, the idea is to build a discrete gradient locally and to
return the resulting Morse complex on critical simplices as a simplification.
In analogy to the one-parameter case, our chunk algorithm is
an attempt to realize this simplification scheme using persistent homology
instead of DMT. This gives more flexibility, as in DMT, the paired cells
are constrained to be incident, while this restriction is not present
for persistence pairs. Consequently, we are able to prove optimality
of our output size in general, while DMT-based approaches only succeeded
to give guarantees for special cases such as multi-filtrations of 3D regular grids and of abstract simplicial 2-complexes \cite{ScaramucciaPhD}.
In practice, the theoretical benefit in terms of output size is rather
marginal, as our experiments show; however, the timings show that
our improved theoretical guarantee comes without performance
penalty; on the contrary, our algorithm is always faster than the DMT-based approach
presented in \cite{Scaramuccia2018} on all tested examples.
We remark, however,
that the DMT-based algorithm returns a complex endowed with a Forman gradient
as well as the corresponding discrete Morse complex,
which can be of potential use for other application domains
than computing persistent homology.

A related question is the computation of a \emph{minimal presentation}
of a persistence module induced by a simplicial or general chain complex.
Roughly speaking, a presentation consists of a finite set
of (graded) generators and relations, and the minimal presentation
is one with the minimal possible number of generators and relations.
Our algorithm does not yield
a minimal presentation, however, since more computations are needed
to identify generators of the chain complex as generators or as relations.
An algorithm by Lesnick and Wright%
\footnote{\url{https://www.ima.umn.edu/2017-2018/SW8.13-15.18/27428}}
\cite{lw-unpublished} computes such a minimal
presentation through matrix reduction in cubic time,
carefully choosing a column
order to reduce the number of reduction instances.
Their algorithm is used in their software package RIVET~\cite{Lesnick2015}.
Our contribution can serve as a pre-processing step for their algorithm,
reducing the size of the matrix through efficient and possibly parallelized
computation and invoking their global reduction step on a much smaller
chain complex.

\section{Background}\label{sec:back}

\subparagraph{Homology.}
Fixed a base field $\F$ and a positive integer $d$, let us consider, for each $0\leq k \leq d$, a finite collection of elements denoted as {\em $k$-generators} (or, equivalently, generators of dimension $k$). A finitely generated chain complex $C_*=(C_k, \partial_k)$ over $\F$ is a collection of pairs $(C_k, \partial_k)$ where:
\begin{itemize}
  \item $C_k$ is the $\F$-vector space spanned by the generators of dimension $k$,
  \item $\partial_k: C_k \rightarrow C_{k-1}$ is called {\em boundary} map and it satisfies the property that $\partial_{k-1}\partial_k=0$.
\end{itemize}
The elements of $C_k$ are called {\em $k$-chains} and, by definition, are $\F$-linear combinations of the generators of dimension $k$. The {\em support} of a $k$-chain is the set of $k$-generators whose coefficient in the chain is not zero.
In the following, we will simply use the term chain complex in place of finitely generated chain complex. Moreover, we will assume that, given a chain complex, an explicit set of generators is provided.
\medskip\\
Given a chain complex $C_*$, we denote as $Z_k:=\ker \,\partial_k$ the space of the $k$-cycles of $C_*$, and as $B_k:=\Ima \, \partial_{k+1}$ the space of the $k$-boundaries of $C_*$. The {\em $k^{th}$ homology space} of $C_*$ is defined as the vector space $H_k(C_*):=Z_k/B_k$.
The rank $\beta_k$ of the $k^{th}$ homology space of a chain complex $C_*$ is called the \textit{$k^{th}$ Betti number} of $C_*$.
\medskip\\
A \emph{chain map} $f_*:C_*\to D_*$ is a collection of linear maps $f_k:C_k\to D_k$ which commutes with the boundary operators of $C_*$ and of $D_*$.
 A simple example of a chain map is the inclusion map, if $C_*$ is a subcomplex
of $D_*$. In general, a chain map $f_*$ induces linear maps $H_k(C_*)\to H_k(D_*)$ for every $k$.
\medskip\\
Chain complexes allow for capturing the combinatorial and the topological structure of a discretized topological space.
Given a finite simplicial complex $K$, the chain complex $C_*$ associated to $K$ is defined by setting $C_k$ as the $\F$-vector space generated by the $k$-simplices of $K$ and the boundary $\partial_k(c)$ of a $k$-chain $c$ corresponding to a $k$-simplex $\sigma$ as the collection of the $(k-1)$-simplices lying on the geometrical boundary of $\sigma$.
Consequently, homology of a finite simplicial complex $K$ is defined as the homology of the chain complex $C_*$ associated to $K$. Intuitively, homology spaces of $K$ reveal the presence of ``holes'' in the simplicial complex.
The non-null elements of each homology space are cycles, which do not represent the boundary of any collection of simplices of $K$.
Specifically, $\beta_0$ counts the number of connected components of $K$, $\beta_1$ its tunnels and holes, and $\beta_2$ the shells surrounding voids or cavities.

\subparagraph{Multi-parameter persistent homology.}
In the following, we will focus for simplicity on datasets filtered by two independent scale parameters. All definitions and results in this paper
can be generalized to more parameters without problems.
Let $p=(p_x, p_y), q=(q_x, q_y) \in \R^2$, we will write throughout $p\leq q$ if $p_x \leq q_x$ and $p_y \leq q_y$.
Given a chain complex $C_*=(C_k, \partial_k)$, let us assume to have an assignment which, for each generator $g \in C_k$, returns a value $v(g)\in \R^2$ such that, for any generator $g' \in C_{k-1}$, if $\langle \partial_k g, g'\rangle \neq 0$, then $v(g')\leq v(g)$.
The assignment $v$ can be extended to a function $v:C_*\rightarrow \R^2$ assigning to each chain $c$ the least common upper bound $v(c)$ of the values $v(g)$ of the generators in the support of $c$. In the following, $v(c)$ will be called the {\em value} of $c$.
\medskip\\
Given a chain complex $C_*=(C_k, \partial_k)$ endowed with a value function $v:C_*\rightarrow \R^2$ and fixed a value $p\in \R^2$ we define $C_*^p=(C_k^p, \partial_k^p)$ as the chain complex for which:
\begin{itemize}
  \item $C_k^p$ is the space of the $k$-chains of $C_k$ having value lower than or equal to $p$,
  \item $\partial_k^p$ is the restriction of $\partial_k$ to $C_k^p$.
\end{itemize}
By the definition of $v$, for any chain $c$ of dimension $k$, we have that $v(\partial_k(c))\leq v(c)$. So, $C_*^p$ is well-defined.
For $p, q \in \R^2$ with $p\leq q$, $C_*^p$ is a chain subcomplex of $C_*^q$. For this reason, we denote the collection $C$ of the chain complexes $C_*^p$ with $p \in \R^2$ as {\em bifiltered chain complex}.
\medskip\\
Given $p, q \in \R^2$ with $p \leq q$, the inclusion map from $C_*^p$ to $C_*^q$ induces a linear map between the corresponding homology spaces $H_k(C_*^p)$ and $H_k(C_*^p)$.
The {\em multi-parameter persistence $k^{th}$ module} $H_k(C)$ of a bifiltered chain complex $C$ is the collection of the homology spaces $H_k(C_*^p)$ with $p$ varying in $\R^2$ along with all the linear maps induced by the inclusion maps.
\medskip\\
Let $C, D$ be two bifiltered chain complexes. We call $C$ and $D$ {\em homology-equivalent}
if, for any fixed $k\in\mathbb{N}$, $H_k(C_*^p)$ and $H_k(D_*^p)$ are isomorphic via a map $\psi^p_k$ and, for any $p, q \in \R^2$ with $p\leq q$, the diagram
\begin{diagram}
H_k(C_*^p)         &\rTo_{\qquad}   &H_k(C_*^q)\\
\dTo_{\psi^p_k \,}  &           &\dTo_{\psi^q_k \,}\\
H_k(D_*^p)         &\rTo_{\qquad}   &H_k(D_*^q)
\end{diagram}
commutes where horizontal maps are induced by inclusion maps.
Moreover, we call $C$ and $D$ {\em quasi-isomorphic} if the isomorphisms of the above diagram are induced by a collection of chain maps $f^p_*: C^p_* \rightarrow D^p_*$ satisfying, for any $p\leq q$ and any $k$, the commutative diagram
\begin{diagram}
C^{p}_k         &\rTo^{\qquad}   &C^q_k\\
\dTo_{f^p_k \,}  &           &\dTo_{f^q_k \,}\\
D^{p}_k          &\rTo^{\qquad}   &D^{q}_k
\end{diagram}
in which horizontal maps are the inclusion maps.
By definition, quasi-isomorphic bifiltered chain complexes are homology-equivalent. The converse, as depicted in Figure \ref{fig:counterexample}, does not hold in general.
\begin{figure}[!htb]
	\centering
	\begin{tabular}{ccccc}
    \includegraphics[width=.35 \linewidth]{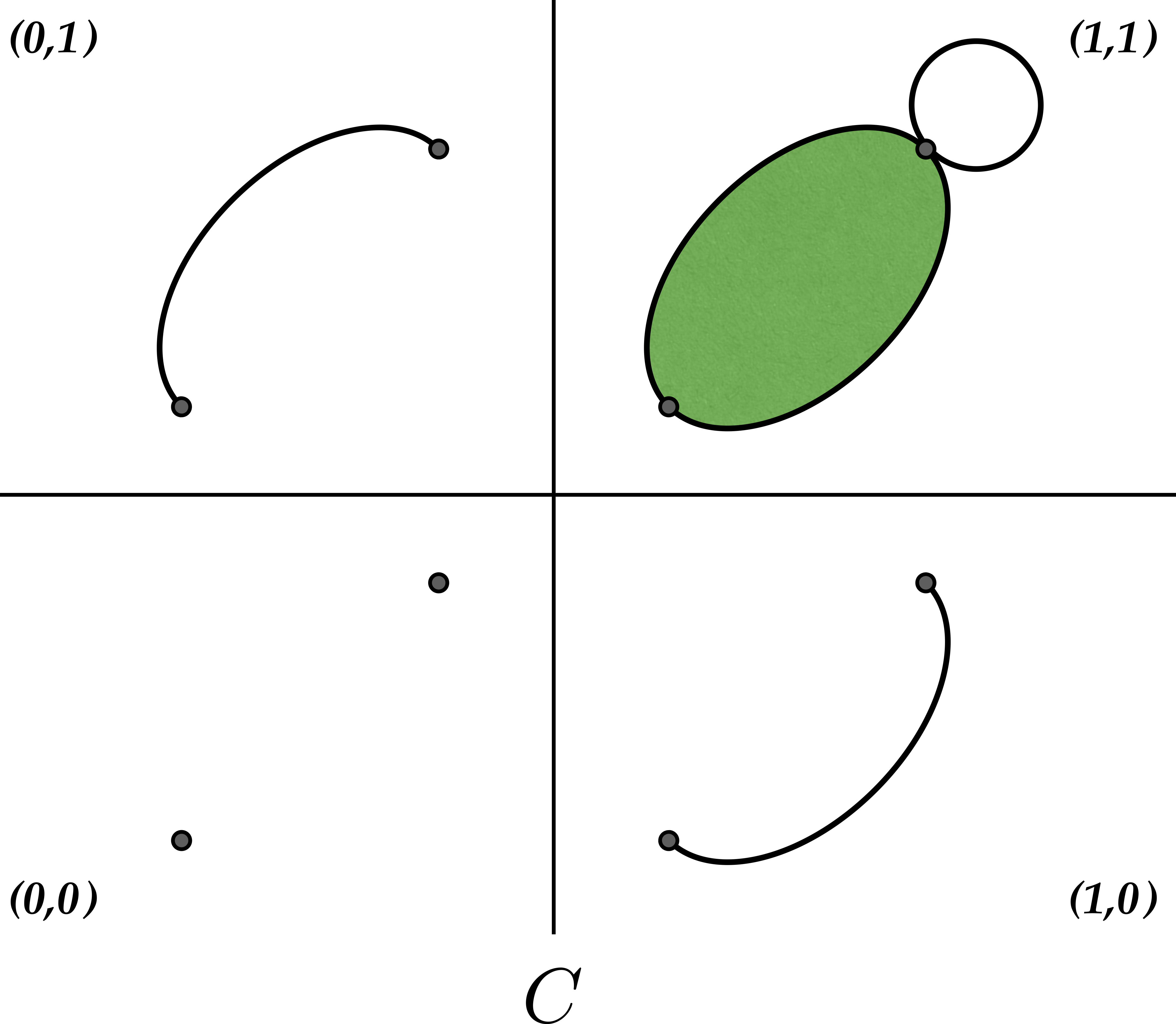} & & & &
    \includegraphics[width=.35 \linewidth]{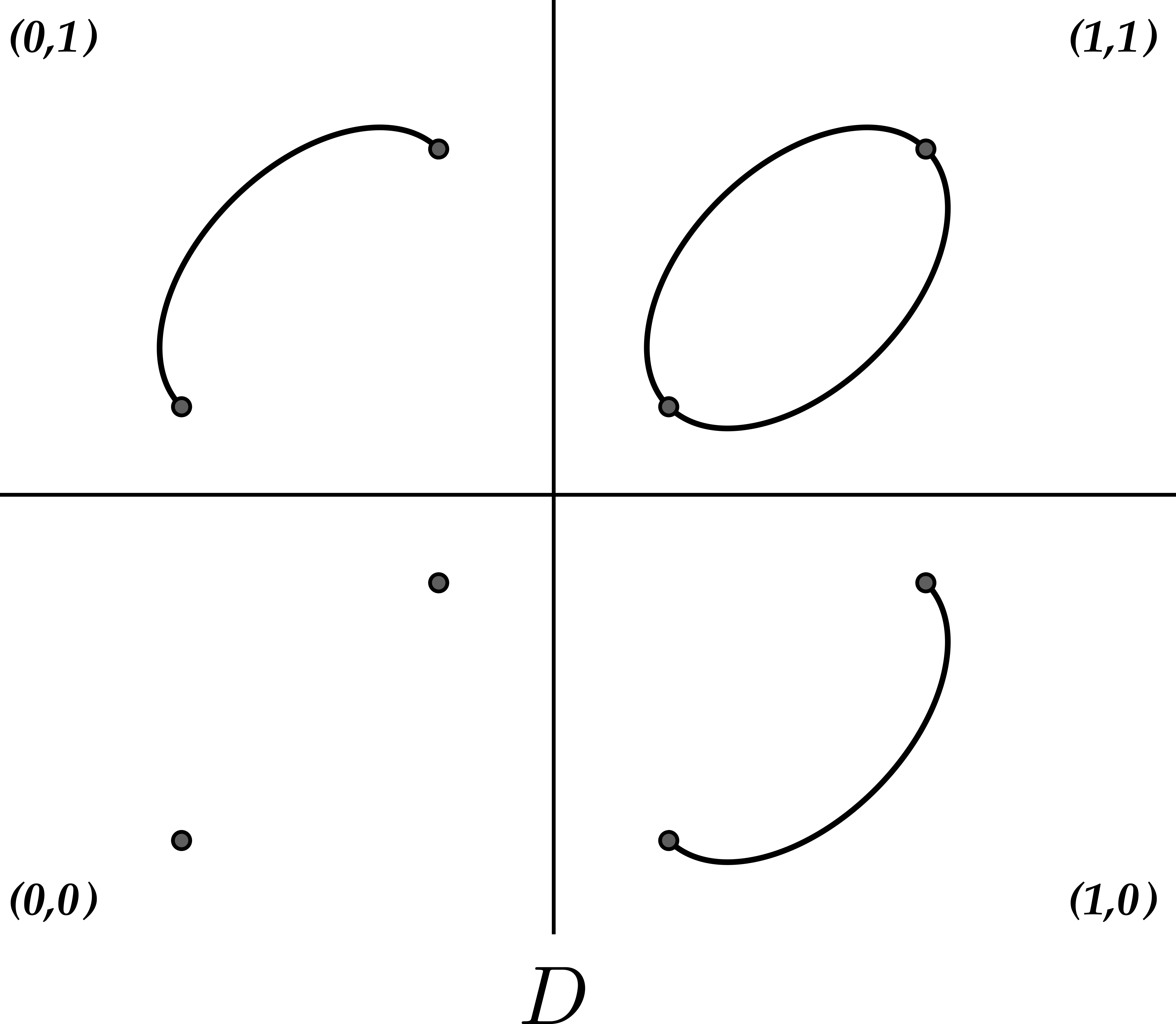}\\
	\end{tabular}
	\caption{The two depicted bifiltered chain complexes are homology-equivalent but not quasi-isomorphic.
}
	\label{fig:counterexample}
\end{figure}
Two chain maps $\alpha_*, \beta_*: C_* \rightarrow D_*$ are called {\em chain-homotopic} if there exists a collection of maps $\phi_k: C_k \rightarrow D_{k+1}$ such that, for any $k$, $\partial^D_{k+1}\phi_k + \phi_{k-1} \partial^C_{k} = \alpha_k - \beta_k$.
Two bifiltered chain complexes $C$ and $D$ are called {\em homotopy-equivalent} if, there exist two collections of chain maps $f^p_*: C^p_* \rightarrow D^p_*$ and  $g^p_*: D^p_* \rightarrow C^p_*$ such that, for any $p \in \R^2$, $f^p_*$ and $g^p_*$ are homotopy-inverse one with respect to the other (i.e., $g^p_* f^p_*$ and $f^p_*g^p_*$ are chain-homotopic to $\id_{C^p_*}$ and $\id_{D^p_*}$, respectively) and they satisfy, for any $p\leq q$, the following commutative diagram
\begin{diagram}
C^{p}_k         &\rTo^{\qquad}   &C^q_k\\
\dTo^{f^p_k \,} \uTo_{g^p_k \,}  &           &\dTo^{f^q_k \,} \uTo_{g^q_k \,}\\
D^{p}_k          &\rTo^{\qquad}   &D^{q}_k
\end{diagram}

in which horizontal maps are the inclusion maps.
Homotopy-equivalent bifiltered chain complexes are necessarily quasi-isomorphic.

\subparagraph{Representation of bifiltered chain complexes.}
We assume that the bifiltered chain complex in input is provided as a finite sequence of triples of the form
\[(p, k, \Delta )\]
where $p\in\R^2$, $k$ is a positive integer, and $\Delta$ is a finite list of pairs $(l, \lambda)$ with $\lambda \in \F$ and $l\in \N$.
In order to retrieve the bifiltered chain complex $C$ represented by this list, each triple $(p, k, \Delta)$ has to be interpreted as a $k$-generator $c$ with value $v(c)=p$ and whose boundary is encoded through $\Delta$. Specifically, by defining as $c_l$ the generator stored in position $l$ of the sequence of triples, the pair $(l, \lambda)$ in $\Delta$ represents that $c_l$ appears in the boundary of $c$ with coefficient $\lambda$.
In order to ensure that the above-described interpretation of the input sequence actually returns a valid bifiltered chain complex, we require that, if $c'$ is a generator appearing in the boundary of $c$, then the value $v$ associated to $c'$ is lower than or equal to the one taken by $c$.

Given a chain complex $C_*$ endowed with a value function $v$, let us consider an injective function $i$ providing the set of generators of $C_*$ with a total order which is consistent with the partial order induced by the value funtion $v$. More precisely, given two generators $c_1, c_2$ of $C_*$, the {\em index} function $i$ has to satisfy that $v(c_1)\leq v(c_2)$ implies $i(c_1)\leq i(c_2)$.

We uniquely represent the bifiltered chain complex $C$ as follows.
Each generator $c$ of $C_*$ is stored in a data structure that we call the \emph{column} of the generator.
A column stores:
\begin{itemize}
  \item the index $i(c)$,
  \item the value $v(c)$,
  \item the dimension $k$ of $g$,
  \item the boundary of $c$.
\end{itemize}
The boundary is stored as a (possibly empty) container of entries of the form $(i(c'),\lambda)$, where $c'$ is a generator of dimension $k-1$ such that $\lambda:=\langle\partial_k(c), c'\rangle\neq 0$.
In such a case, we say that $c'$ is in the boundary of $c$.
The boundary of $c$ is then the linear combination induced by this container.
The name ``column'' comes from the idea that the collection of all columns
can be visualized as a matrix.
If we decorate the columns of the matrix with extra information (index, value, and dimension),
we obtain the data structure from above.
We will also write $k$-column if the column represents a generator of dimension $k$.
Due to the fact that a column is nothing but an encoding of a generator, in the following, with a small abuse of notation we will often use interchangeably the two terms.
Given a $k$-column $c$, we define the {\em local pivot} of $c$ as the generator in the boundary of $c$ with maximal index such that $v(c')=v(c)$. If no such
generator exists, we simply say that $c$ has no local pivot.
\medskip\\
Arguably, the most common case of input data is simplicial complexes.
Let $K$ denote a finite simplicial complex. A \emph{bifiltration}
is a collection $(K^p)_{p\in\R^2}$ of subcomplexes of $K$
such that $K^p\subseteq K^q$ whenever $p\leq q$.
Fixing a simplex $\sigma\in K$, we say that $p\in\R^2$ is \emph{critical}
for $\sigma$, if $\sigma\in K^{p}$, but $\sigma\notin K^{p-(\eps,0)}\cup K^{p-(0,\eps)}$ for every $\eps>0$. The bifiltration is called \emph{$h$-critical}
with $h\geq 1$, if for every $\sigma\in K$, there are at most $h$ critical
positions in $\R^2$. For instance, $1$-critical means that every simplex
in $K$ enters the filtration at a unique minimal value.
Such 1-critical bifiltrations can easily be described by a sequence of the
form $(p,k,\Delta)$ as above by adding one line per simplex which defines
its critical position, its dimensions, and its boundary.
The case of $h$-critical bifiltrations can be handled with the following
trick (see also~\cite{Chacholski2017}). Letting $p_1,\ldots,p_h$ denote the critical positions of $\sigma$, sorted
by $x$-coordinate, we introduce $h$ distinct $k$-generators $c_1,\ldots,c_h$
of the form
$(p_i,k,\Delta)$, that is, $h$ copies of the same simplex.
For two consecutive positions $p_i=(x_i,y_i)$ and $p_{i+1}=(x_{i+1},y_{i+1})$,
we add an additional $(k+1)$-generator at $(x_{i+1},y_i)$ (which is the smallest
point $q$ such that $p_i\leq q$ and $p_{i+1}\leq q$), whose boundary is
equal to $c_i-c_{i+1}$.

\section{Algorithm}\label{sec:alg}

Given a bifiltered chain complex $C$ encoded as a collection of columns,
we define the \emph{chunk algorithm} that returns as output
a bifiltered chain complex as a collection of columns
that is homotopy-equivalent to $C$ and has fewer generators.
The algorithm works in three phases:
\begin{itemize}
  \item {\em local reduction};
  \item {\em compression};
  \item {\em removal of local columns}.
\end{itemize}

\subparagraph{Phase I: Local reduction.}
The goal of this phase is to label columns as global,
local positive or local negative columns.
Initially, all columns are unlabeled.
We proceed in decreasing dimensions, from the top-dimensions
of $C$ down to $0$.
For dimension $k$,
the algorithm traverses the $k$-columns in increasing order
with respect to $i$ and performs the following operations on a $k$-column $c$.
If $c$ is already labeled, do nothing.
Otherwise,
as long as $c$ has a local pivot and there is a $k$-column $c'$ with $i(c')<i(c)$
and the same local pivot as $c$,
perform the column addition $c\gets c+\lambda c'$,
where $\lambda$ is chosen such that the local pivot of $c$ disappers.
If at the end of this loop, the column $c$ does not have a local pivot,
we label the column as global and proceed.
Otherwise, we label $c$ as local negative and its local pivot $c'$ as local positive.
We call $(c',c)$ a \emph{local pair} in this case.
This ends the description of Phase I of the algorithm.

Note that within the local reduction, any column addition
of the form $c\gets c+\lambda c'$ implies that $v(c)=v(c')$.
Hence, the local reduction operates independently on columns
of the same value. We call blocks of columns with the same value
\emph{chunks}; hence the name of the algorithm.
Operations on one chunk do not affect columns on any other chunk,
hence the local reduction phase can be readily invoked in parallel
on the chunks of the chain complex.

Finally, note that by proceeding in decreasing dimension, we avoid
performing any column additions on local positive columns.
That is reminiscent of the \emph{clearing optimization}
in the one-parameter version~\cite{Bauer2014, Chen2011}.

\subparagraph{Phase II: Compression.}
In the second phase, the algorithm removes local (positive or negative)
generators from the boundary of all global columns in the matrix:

For each global $k$-column $c$, while the boundary of the column contains a generator that is local positive or local negative, the algorithm picks the
local $(k-1)$-generator $c'$ with maximal index.
\begin{itemize}
  \item if $c'$ is negative, remove $c'$ from the boundary of $c$;
  \item if $c'$ is positive, denote $c''$ as the (unique) local negative $k$-column with $c'$ as local pivot and perform the column addition $c\gets c+\lambda c''$, where $\lambda$ is chosen such that $c'$ disappears in the boundary of $c$.
\end{itemize}
This ends the description of the compression phase.
On termination, all columns in the boundary of a global $k$-column are global $(k-1)$-columns.

The above process terminates for a column $c$ because the index of the maximal
local generator in the boundary of $c$ is strictly decreasing in each step.
That is clear for the case that $c'$ is local negative. If $c'$ is local positive,
then $c'$ is the generator in the boundary of $c''$ with the maximal index, so the column
addition does not introduce in the boundary of $c$ any generators with a larger index.

Note that the compression of a global column does not affect the result
on any other global column. Thus, the phase can be parallelized as well.

\subparagraph{Phase III: Removal of local pairs.}
In this step, the chain complex becomes smaller. The procedure is simple:
traverse all columns, and remove all columns labeled
as local (positive or negative). Return the remaining (global) columns
as resulting chain complex. This finishes the description of the phase and the entire chunk algorithm.

\section{Correctness}\label{sec:correct}

In this section, we prove that the presented algorithm returns a bifiltered chain complex that is homotopy-equivalent to the input.
For that, we define two elementary operations on chain complexes:

\begin{description}
\item [Order-preserving column addition.] An operation of the form
$c\gets c+\lambda c'$ is called \emph{order-preserving}
if $v(c')\leq v(c)$. Note that such an operation maintains the property
that any generator $c''$ in the boundary of $c$
satisfies $v(c'')\leq v(c)$,
by transitivity of $v$. We remark that order-preserving column additions
are the generalization of left-to-right column additions in the one-parameter case.

\item [Removal of a local pair.]
Fix a local pair $(c_1,c_2)$, that means, $c_1$ is a $k$-column,
$c_2$ is a $(k+1)$-column, $v(c_1)=v(c_2)$ and $c_1$ is the local pivot of $c_2$.
We call {\em removal of the local pair} $(c_1, c_2)$ the operation $Del(c_{1},c_{2})$ which acts on the columns as follows.
\begin{itemize}
\item For every $(k+1)$-column $c$, replace its boundary $\partial_{k+1}(c)$ with $\partial_{k+1}(c) - \lambda^{-1} \mu \partial_{k+1}(c_2)$, where $\lambda$ and $\mu$ are the coefficients of $c_1$ in $\partial_{k+1}(c_2)$ and in $\partial_{k+1}(c)$, respectively. In particular, after this operation, $c_1$ disappeared from the boundary of any $(k+1)$-columns.
% except $c_2$.
\item For every $(k+2)$-column $c$, update its boundary by setting the coefficient of $c_{2}$ in $\partial_{k+2}(c)$ to $0$. Visualizing the chain complex
as a matrix, this corresponds to removing the row corresponding to $c_2$.
\item Delete the columns $c_1$ and $c_{2}$.
\end{itemize}
Note that all column additions performed in the first step are
order-preserving because the pair $(c_1,c_2)$ is local,
that is, $v(c_1)=v(c_2)$.
\end{description}

We will show at the end of this section that both elementary operations
leave the homotopy type the bifiltered chain complex unchanged.

\begin{theorem}
Let $\bar{C}$ denote the bifiltered chain complex computed by the chunk algorithm
from the previous section on an input bifiltered chain complex $C$.
Then, $\bar{C}$ and $C$ are homotopy-equivalent.
\end{theorem}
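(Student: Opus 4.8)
The plan is to prove the theorem in two stages: first \emph{reduce} it to the homotopy-invariance of the two elementary operations, and then verify that the chunk algorithm is nothing but a finite composition of such operations. Homotopy equivalence of bifiltered chain complexes is a transitive relation: composing homotopy-inverse chain maps levelwise again yields homotopy-inverse chain maps, and the required squares against the inclusion maps compose. Hence it suffices to exhibit a chain of complexes $C=C^{(0)},C^{(1)},\dots,C^{(N)}=\bar C$ in which each $C^{(i)}$ arises from $C^{(i-1)}$ by a \emph{single} order-preserving column addition or a \emph{single} removal of a local pair, and then to invoke the (deferred) lemmas stating that each such step preserves the homotopy type. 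Proving those two lemmas is the separate substance promised at the end of the section; granting them, the theorem follows by transitivity.

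Next I would account for the three phases as elementary operations. In Phase I every column addition $c\gets c+\lambda c'$ is performed when $c$ and $c'$ share a local pivot, which forces $v(c)=v(c')$; such an addition is therefore order-preserving, and since Phase I only relabels (and never deletes) columns, it is a sequence of order-preserving additions. I would then argue that Phases II and III together realize exactly one application of $Del(c_1,c_2)$ per local pair $(c_1,c_2)$, carried out in order of increasing dimension of $c_1$. The content of $Del$ matches the algorithm line by line: step~1 of $Del$, which clears the local positive $c_1$ from every $(k+1)$-column by the order-preserving addition of a multiple of $c_2$, reproduces precisely the additions Phase~II makes against global columns whose boundary meets a local positive generator; step~2, which zeroes the row of the local negative $c_2$, reproduces the instruction of Phase~II to remove a local negative $c'$ from a boundary; and step~3 deletes the pair, which is the deletion carried out in Phase~III.

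Finally I would check that this composition actually outputs $\bar C$. On termination of compression, the boundary of every global $k$-column consists of global generators only, so steps~1 and~2 of each $Del$ leave all global columns untouched: no global column contains the local positive $c_1$, and none contains the local negative $c_2$. Consequently, after all $Del$ operations the surviving columns are exactly the global columns with their compressed boundaries, i.e.\ $\bar C$. I expect the main obstacle to be showing that each $Del(c_1,c_2)$ is \emph{legitimate} at the moment it is applied, namely that $c_1$ is still the local pivot of $c_2$ after the earlier $Del$ steps have altered boundaries. The point to verify is that clearing a different local positive $c_1''$ from $c_2$ (by adding a multiple of its partner $c_2''$), or deleting a lower-dimensional negative row from $c_2$, never disturbs the maximal-index, equal-value entry $c_1$ of $c_2$: any entry such an addition introduces into $c_2$ has value strictly below $v(c_2)$, or, if of value $v(c_2)$, strictly smaller index than $i(c_1)$. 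Processing the pairs by increasing dimension of $c_1$ guarantees that every addition affecting a given $c_2$ occurs before $Del(c_1,c_2)$ is invoked, so the pair stays valid throughout. Establishing this invariant, together with the two homotopy-invariance lemmas, completes the argument.
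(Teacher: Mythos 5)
Your high-level strategy coincides with the paper's: by transitivity it suffices to write the algorithm as a finite composition of order-preserving column additions and removals of local pairs, whose homotopy-invariance is proved separately; your treatment of Phase I is the paper's argument verbatim, and your attention to the validity of each pair $(c_1,c_2)$ at the moment $Del(c_1,c_2)$ is applied is a genuine point that the paper silently assumes, and your sketch of that invariant is correct. The gap lies in the central claim that Phases II and III ``realize exactly'' one $Del$ per local pair, matching the algorithm ``line by line''. They do not. The operation $Del(c_1,c_2)$ performs its additions on \emph{every} $(k+1)$-column containing $c_1$, whereas Phase II only ever modifies global columns. Consequently, as your $Del$ sequence proceeds, the boundaries of the local negative columns are themselves altered, and a later $Del(d_1,d_2)$ adds to a global column the \emph{current} boundary of $d_2$, not the Phase-I boundary that Phase II adds. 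With your ordering (increasing dimension of $c_1$) the two genuinely differ: by the time the dimension-$k$ pairs are processed, step 2 of the dimension-$(k-1)$ removals has already deleted every local negative $k$-row from $\partial d_2$, so the chain being added is a partially compressed one; moreover step 1 also modifies local positive $(k+1)$-columns, which the algorithm never touches. The two processes therefore diverge in their intermediate states and agree at best in their final result---and that final agreement is precisely what has to be proved.

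Your last paragraph does not prove it; it is circular. The premise ``on termination of compression, the boundary of every global $k$-column consists of global generators only'' presupposes that the $Del$ sequence compresses the global columns to the same boundaries as Phase II, which is the claim at issue: in your decomposition, compression is not a separate earlier stage but is supposed to be realized by the $Del$s themselves. (If instead you intend the $Del$s to run after the algorithm's Phase II, then Phase II's removals of local negative entries from global columns are unaccounted for, and such a row removal is not one of the two elementary operations.) The paper closes exactly this gap by interposing a \emph{modified} chunk algorithm---Phase II clears only local positive entries and keeps the negative ones, Phase III performs the full removals---which is manifestly a legal sequence of the two elementary operations, and then proving output equality: a global column produced by the modified Phase II differs from the original one only by local negative entries, and during the modified Phase III no addition ever involves a global column (its local positives are already gone), so the removals delete precisely those entries. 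To repair your proof you need an argument of this kind: either such an intermediate algorithm, or a uniqueness argument showing that, modulo the span of the local negative generators together with the boundaries of the local negative columns, a global column has a unique representative supported on global generators (which requires showing that this span meets the space of globally supported chains trivially), or a different ordering---decreasing dimension and, within a dimension, decreasing pivot index---under which each $Del$ really does use unmodified Phase-I boundaries and touches only global columns, so that the line-by-line correspondence you assert becomes true.
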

\begin{proof}
The idea is to express the chunk algorithm by a sequence of order-preserving
column additions and removals of local pairs.
Because every column addition in Phase I is between columns of the same
value, all column additions are order-preserving.
Hence, after Phase I, the chain complex is equivalent to the input.

In Phase II, note that all column additions performed are order-preserving.
Indeed, if $c'$ is in the boundary of column $c$, then $v(c')\leq v(c)$ holds.
If $c'$ is local positive, it triggers a column addition
of the form $c\gets c+\lambda c''$ with
its local negative counterpart $c''$. Since $v(c')=v(c'')$,
$v(c'')\leq v(c)$ as well.

A further manipulation in Phase II is the removal of local negative columns from the boundary of global columns.
These removals cannot be directly expressed in terms of the two elementary operations
from above. Instead, we define a slight variation of our algorithm:
in Phase II, when we encounter a local negative $c'$, we do nothing.
In other words, the compression only removes the local positive generators
from the boundary $c$, and keeps local negative and global generators.
In Phase III, instead of removing local columns, we perform a removal
of a local pair $(c_1,c_2)$ whenever we encounter a local negative column $c_2$
with local pivot $c_1$. We call that algorithm the \emph{modified chunk algorithm}.
Note that this modified algorithm is a sequence of order-preserving
column additions, followed by a sequence of local pair removals, and thus
produces a chain complex that is equivalent to the input $C$.

We argue next that the chunk algorithm and the modified chunk algorithm
yield the same output. Since both versions eventually remove all local
columns, it suffices to show that they yield the same global columns.
Fix an index of a global column, and let $c$ denote the column of that index
returned by the original chunk algorithm. Let $c^\ast$ denote the
column of the same index produced by the modified algorithm after
the modified Phase II. The difference of $c^\ast$ and $c$ lies in the
presence of local negative generators in the boundary of $c^\ast$ which have been removed in $c$.
The modified Phase III affects $c^\ast$ in the following way:
when a local pair $(c_1,c_2)$ is removed, the local negative
$c_2$ is, if it is present, removed from the boundary of $c^\ast$. There is no column addition
during the modified Phase III involving $c^\ast$ because all local positive
columns have been eliminated. Hence, the effect of the modified Phase III
on $c^\ast$ is that all local negative columns are removed from its boundary which turns
$c^\ast$ to be equal to $c$ at the end of the algorithm.
Hence, the output of both algorithms is the same, proving the theorem.
\end{proof}

We proceed with the proofs that both elementary operations
yield homotopy-equivalent bifiltered chain complexes.

\subparagraph{Order-preserving column addition.}
Given two $k$-columns $c_1, c_2$ such that $v(c_1)\leq v(c_2)$ and a scalar value $\lambda \in \F$, the algorithm we propose allows for adding $\lambda$ copies of the boundary of column $c_1$ to the boundary of column $c_2$. In this section, we formalize that in terms of modifications of chain complexes and we prove that this operation does not affect multi-parameter persistent homology.
\medskip\\
Given a chain complex $C_*=(C_l, \partial_l)$ endowed with a value function $v:C_*\rightarrow \R^2$, let us consider two generators $c_1, c_2$ among the ones of the space of the $k$-chains $C_k$ such that $\langle c_1, c_2 \rangle=0$ and $v(c_1)\leq v(c_2)$. Chosen a scalar value $\lambda \in \F$, let us define $\bar{C}_*=(\bar{C}_l, \bar{\partial}_l)$ by setting:
\begin{itemize}
  \item $\bar{C}_l=C_l$,
  \item for any $c\in \bar{C}_l$, \[\bar{\partial}_l(c)=\begin{cases}
  \partial_k(c) + \lambda \langle c, c_2 \rangle \partial_k (c_1) & \text{ if } l=k,\\
  \partial_{k+1}(c) - \lambda \langle \partial_{k+1} (c), c_2 \rangle c_1 & \text{ if } l=k+1,\\
  \partial_l(c) & \text{ otherwise.}
  \end{cases}\]
\end{itemize}

As formally proven in Appendix \ref{sec:app_details_1}, $\bar{C}_*$ is a chain complex.
\medskip\\
Let us define the maps $f_*: C_* \rightarrow \bar{C}_*$, $g_*: \bar{C}_* \rightarrow C_*$ as follows:
\begin{itemize}
  \item for any $c\in C_l$, \[f_l(c)=\begin{cases}
  c - \lambda \langle c, c_2 \rangle c_1 & \text{ if } l=k,\\
  c & \text{ otherwise;}
  \end{cases}\]
  \item for any $c\in \bar{C}_l$, \[g_l(c)=\begin{cases}
  c + \lambda \langle c, c_2 \rangle c_1 & \text{ if } l=k,\\
  c & \text{ otherwise.}
  \end{cases}\]
\end{itemize}

The just defined maps enable to prove the following result (see Appendix \ref{sec:app_details_1} for detailed proofs).

\begin{proposition}
  $C_*$ and $\bar{C}_*$ are isomorphic via the chain map $f_*$ and its inverse $g_*$.
\end{proposition}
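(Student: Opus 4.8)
The plan is to verify directly that $f_*$ and $g_*$ are mutually inverse chain maps. Since $\bar{C}_*$ and $C_*$ share the same underlying graded vector space, this splits into a degreewise linear-algebra check (that $f_l$ and $g_l$ invert each other) together with the commutation check of $f_*$ against the two boundary operators; the chain-map property of $g_*$ then comes for free.

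First I would record that $f_l$ and $g_l$ are the identity for $l\neq k$, and that on $C_k$ they are the linear maps $f_k(c)=c-\lambda\langle c,c_2\rangle c_1$ and $g_k(c)=c+\lambda\langle c,c_2\rangle c_1$. The crucial consequence of the hypothesis $\langle c_1,c_2\rangle=0$ is that $f_k(c_1)=g_k(c_1)=c_1$. Using this, a one-line computation gives $g_k f_k(c)=g_k(c)-\lambda\langle c,c_2\rangle g_k(c_1)=c$ and symmetrically $f_k g_k(c)=c$ for every $c\in C_k$, so $f_l$ and $g_l$ are mutually inverse isomorphisms in every degree.

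Next I would check the chain-map identity $\bar{\partial}_l f_l=f_{l-1}\partial_l$. For $l\notin\{k,k+1\}$ both boundary maps equal the original and the relevant $f$'s are the identity, so the identity is immediate. The two substantial cases are $l=k$ and $l=k+1$. For $l=k$, I would expand $\bar{\partial}_k f_k(c)=\bar{\partial}_k(c)-\lambda\langle c,c_2\rangle\bar{\partial}_k(c_1)$ and use $\bar{\partial}_k(c_1)=\partial_k(c_1)$ (again by $\langle c_1,c_2\rangle=0$), so that the two correction terms cancel and leave $\partial_k(c)=f_{k-1}\partial_k(c)$. For $l=k+1$, since $f_{k+1}=\id$, the left-hand side is exactly the defining formula for $\bar{\partial}_{k+1}(c)$, and applying $f_k$ to the $k$-chain $\partial_{k+1}(c)$, using linearity of the coefficient functional $\langle\cdot,c_2\rangle$, reproduces the same expression, so both sides agree. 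Having shown $f_*$ is a chain map, I would conjugate the identity $\bar{\partial}_l f_l=f_{l-1}\partial_l$ by the inverses $g$ to obtain $\partial_l g_l=g_{l-1}\bar{\partial}_l$, so that $g_*=f_*^{-1}$ is automatically a chain map.

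These computations are routine; the only point that needs care is the systematic use of $\langle c_1,c_2\rangle=0$, which is precisely what makes $c_1$ invisible to the corrections applied to itself and hence makes $f_*,g_*$ honest inverses rather than merely one-sided. To make this isomorphism relevant in the bifiltered setting (and thereby conclude that the operation does not affect multi-parameter persistence), I would finally observe that $f_*$ and $g_*$ preserve values: the only generator moved is $c_2$, and since $v(c_1)\le v(c_2)$ both $f_k(c_2)=c_2-\lambda c_1$ and $g_k(c_2)=c_2+\lambda c_1$ have value $v(c_2)$. Hence $f_k$ and $g_k$ restrict to maps $C_k^p\to\bar{C}_k^p$ and $\bar{C}_k^p\to C_k^p$ for every $p\in\R^2$ and commute with the inclusions, which together with the chain-map property upgrades the statement to an isomorphism at each filtration level.
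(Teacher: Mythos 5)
Your proof is correct, and at bottom it is the same direct-verification strategy as the paper's: check that $f_l$ and $g_l$ are mutually inverse in each degree, and that the chain-map identity holds in the two nontrivial degrees $l=k$ and $l=k+1$. The differences are organizational, but they buy real economy. The paper verifies that $f_*$ \emph{and} $g_*$ are chain maps by two independent sets of expansions, and then expands $g_k f_k$ and $f_k g_k$ from scratch; in each of these computations the hypothesis $\langle c_1,c_2\rangle=0$ enters only at the end, to kill a quadratic term of the form $\lambda^2\langle c,c_2\rangle\langle c_1,c_2\rangle c_1$. You instead extract the single consequence $f_k(c_1)=g_k(c_1)=c_1$ (and likewise $\bar{\partial}_k(c_1)=\partial_k(c_1)$) up front, which collapses each check to a one-line cancellation, and you obtain the chain-map property of $g_*$ formally, by inverting the identity $\bar{\partial}_l f_l=f_{l-1}\partial_l$, rather than by recomputation as the paper does. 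Your closing paragraph on value preservation and the well-definedness of the restrictions $f^p_k$, $g^p_k$ is not part of this proposition's proof in the paper --- it is the content of the two subsequent lemmas in Appendix~\ref{sec:app_details_1} feeding into Theorem~\ref{thm:column_add} --- but your argument for it (only the generator $c_2$ is moved, and $v(c_1)\leq v(c_2)$ keeps values unchanged) matches the paper's and is sound.
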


Since the function $v$ is a valid value function for $\bar{C}_*$ inducing a bifiltered chain complex $\bar{C}$ and, for any $p\in \R^2$, the restrictions $f^p_l:C^p_l \rightarrow \bar{C}^p_l$ and $g^p_l:\bar{C}^p_l \rightarrow C^p_l$ of the maps $f_l$ and $g_l$, respectively, are well-defined (see Appendix \ref{sec:app_details_1}), we are ready to prove the equivalence between the two bifiltered chain complexes $C$ and $\bar{C}$.

\begin{theorem}\label{thm:column_add}
  $C$ and $\bar{C}$ are homotopy-equivalent.
\end{theorem}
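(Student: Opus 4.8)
The plan is to leverage the preceding Proposition, which already establishes that $f_*$ and $g_*$ are mutually inverse chain isomorphisms between $C_*$ and $\bar{C}_*$. Since $C$ and $\bar{C}$ share the same value function $v$ and the same underlying generators (as $\bar{C}_l = C_l$), and since we are told that the restricted maps $f^p_l$ and $g^p_l$ are well-defined on each sublevel piece, the bulk of the argument reduces to checking that these restricted maps assemble into a homotopy-equivalence in the sense required by the definition from the Background section.

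First I would verify that $f^p_*$ and $g^p_*$ are genuine chain maps between $C^p_*$ and $\bar{C}^p_*$ for each $p\in\R^2$. This follows because $f_*$ and $g_*$ are chain maps globally, and restriction to the subcomplex of chains with value $\leq p$ commutes with the boundary operators (the boundary of a chain of value $\leq p$ still has value $\leq p$, by the defining property of $v$). The key point making the restrictions well-defined is precisely that $v(c_1)\leq v(c_2)$: the terms $\lambda\langle c, c_2\rangle c_1$ appearing in $f_l$ and $g_l$ only ever add a copy of $c_1$ when $c_2$ is in the support, and since $v(c_1)\leq v(c_2)\leq v(c)$ whenever $c_2$ lies in the support of $c$, the image stays inside $C^p_l$ (resp. $\bar{C}^p_l$). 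This is the role that the order-preserving hypothesis plays.

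Next I would observe that, being mutual inverses, $g^p_* f^p_* = \id_{C^p_*}$ and $f^p_* g^p_* = \id_{\bar{C}^p_*}$ hold exactly (not merely up to homotopy); so the chain-homotopy condition is satisfied trivially by taking the homotopy operators $\phi_k$ to be zero. Finally, I would check the commutativity of the required square: for $p\leq q$ the horizontal inclusion maps $C^p_k\hookrightarrow C^q_k$ and $\bar{C}^p_k\hookrightarrow\bar{C}^q_k$ commute with the vertical maps $f^p_k, f^q_k$ (and likewise $g$), because $f$ and $g$ are defined by a single global formula independent of $p$; restricting that formula to a smaller sublevel set and then including is the same as including and then applying the formula.

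The step requiring the most care is the well-definedness of the restricted maps, i.e. confirming that $f^p_l$ sends $C^p_l$ into $\bar{C}^p_l$ (and $g^p_l$ the reverse), since this is where the inequality $v(c_1)\leq v(c_2)$ is essential and where a naive column addition could fail. All of this is already asserted in the excerpt via the reference to Appendix~\ref{sec:app_details_1}, so in the main text the proof amounts to assembling the isomorphism of the Proposition together with these restriction and commutativity observations into the homotopy-equivalence definition, with the null homotopy making the homotopy-inverse condition immediate.
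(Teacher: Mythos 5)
Your proposal is correct and follows essentially the same route as the paper: it invokes the preceding Proposition that $f_*$ and $g_*$ are mutually inverse chain isomorphisms, the appendix lemmas establishing that the restrictions $f^p_l$, $g^p_l$ are well-defined (precisely via $v(c_1)\leq v(c_2)\leq v(c)$ when $c_2$ lies in the support of $c$), and the commutativity of the square with the inclusion maps, which together give homotopy-equivalence with the homotopy-inverse condition holding exactly. The only difference is cosmetic: you make explicit that exact inverses satisfy the chain-homotopy condition with zero homotopy operators, which the paper leaves implicit.
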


\begin{proof}
  The previous results enables us to claim that, for any $p,q \in \R^2$ with $p\leq q$, the following diagram (in which horizontal maps are the inclusion maps)
  \begin{diagram}
  C^p_l         &\rTo_{\qquad}   &C^q_l\\
  \dTo_{f^p_l \,}  &           &\dTo_{f^q_l \,}\\
  \bar{C}^p_l         &\rTo_{\qquad}   &\bar{C}^q_l
  \end{diagram}
  commutes and the maps $f^p_l$, $f^q_l$ are isomorphisms.
\end{proof}

\subparagraph{Removal of a local pair.}
Given a local pair $(c_1, c_2)$ of columns, the algorithm we propose allows for deleting them from the boundary matrix. In this section, we formalize that in terms of modifications of chain complexes and we prove that this operation does not affect multi-parameter persistent homology.
\medskip\\
Given a chain complex $C_*=(C_l, \partial_l)$ endowed with a value function $v:C_*\rightarrow \R^2$, let us consider two generators $c_1, c_2$ among the ones of the space of the $k$-chains $C_k$ and of the space of the $(k+1)$-chains $C_{k+1}$, respectively, such that $\lambda:=\langle \partial_{k+1}(c_2), c_1 \rangle \neq 0$ and $v(c_1)=v(c_2)$.
Let us define $\bar{C}_*=(\bar{C}_l, \bar{\partial}_l)$ by setting:
\begin{itemize}
  \item the space of the $l$-chains $\bar{C}_l$ as \[\bar{C}_l=\begin{cases}
  \{c\in C_k \, |\, \langle c, c_1 \rangle = 0 \} & \text{ if } l=k,\\
  \{c\in C_{k+1} \, |\, \langle c, c_2 \rangle = 0 \} & \text{ if } l=k+1,\\
  C_l & \text{ otherwise;}
  \end{cases}\]
  \item for any $c\in \bar{C}_l$, \[\bar{\partial}_l(c)=\begin{cases}
  \partial_{k+1}(c) - \lambda^{-1} \langle \partial_{k+1}(c), c_1 \rangle \partial_{k+1}(c_2) & \text{ if } l=k+1,\\
  \partial_{k+2}(c) - \langle \partial_{k+2} (c), c_2 \rangle c_2 & \text{ if } l=k+2,\\
  \partial_l(c) & \text{ otherwise.}
  \end{cases}\]
\end{itemize}

As formally proven in Appendix \ref{sec:app_details_1}, $\bar{C}_*$ is a chain complex.
\medskip\\
Let us define the maps $r_*: C_* \rightarrow \bar{C}_*$, $s_*: \bar{C}_* \rightarrow C_*$ as follows:
\begin{itemize}
  \item for any $c\in C_l$, \[r_l(c)=\begin{cases}
  c - \lambda^{-1} \langle c, c_1 \rangle \partial_{k+1}(c_2) & \text{ if } l=k,\\
  c - \langle c, c_2 \rangle c_2 & \text{ if } l=k+1,\\
  c & \text{ otherwise;}
  \end{cases}\]
  \item for any $c\in \bar{C}_l$, \[s_l(c)=\begin{cases}
  c - \lambda^{-1} \langle \partial_{k+1}(c), c_1 \rangle c_2 & \text{ if } l=k+1,\\
  c & \text{ otherwise;}
  \end{cases}\]
\end{itemize}

The just defined maps enable to prove the following result (see Appendix \ref{sec:app_details_1} for detailed proofs).

\begin{proposition}
  The maps $r_*$ and $s_*$ are chain maps which are homotopy-inverse one with respect to the other.
\end{proposition}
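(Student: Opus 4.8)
The plan is to proceed in three stages: confirm that $r_*$ and $s_*$ are well-defined maps of the right type, verify the two chain-map identities, and then exhibit the homotopy realizing $s_* r_* \simeq \id_{C_*}$ while checking that $r_* s_*$ equals $\id_{\bar{C}_*}$ outright. Every computation rests on the three relations available from the construction: $\langle \partial_{k+1}(c_2), c_1\rangle = \lambda$, the normalization $\langle c_2, c_2\rangle = 1$ (since $c_2$ is a generator), and the complex identities $\partial_k\partial_{k+1} = 0$ and $\partial_{k+1}\partial_{k+2} = 0$.

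For well-definedness I would note that $r_l$ and $s_l$ agree with the identity outside degrees $k$ and $k+1$, so only two checks are needed: $\langle r_k(c), c_1\rangle = \langle c, c_1\rangle - \lambda^{-1}\langle c, c_1\rangle\,\langle \partial_{k+1}(c_2), c_1\rangle = 0$ places $r_k(c)$ in $\bar{C}_k$, and $\langle r_{k+1}(c), c_2\rangle = \langle c, c_2\rangle - \langle c, c_2\rangle\,\langle c_2, c_2\rangle = 0$ places $r_{k+1}(c)$ in $\bar{C}_{k+1}$. The chain-map identities then reduce to the squares in degrees $k, k+1, k+2$, since both maps and both differentials coincide with the identity and with $\partial$ elsewhere. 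For $r_*$, the degree-$k$ square collapses because $\partial_k\partial_{k+1}(c_2) = 0$; the degree-$(k+1)$ square expands to exactly the formula defining $\bar\partial_{k+1}$; and the degree-$(k+2)$ square is a direct match of formulas. For $s_*$, the degree-$(k+1)$ square again reproduces $\bar\partial_{k+1}$ (here $s_k$ is just the inclusion $\bar{C}_k \subseteq C_k$), while the degree-$(k+2)$ square cancels after invoking $\partial_{k+1}\partial_{k+2} = 0$ together with $\langle \partial_{k+1}(c_2), c_1\rangle = \lambda$.

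Finally I would compute the two composites. Substituting $\langle c, c_2\rangle = 0$ (valid on $\bar{C}_{k+1}$) makes the two correction terms in $r_{k+1} s_{k+1}(c)$ cancel, giving $r_{k+1}s_{k+1} = \id$; and on $\bar{C}_k$ the relation $\langle c, c_1\rangle = 0$ makes $r_k s_k$ the identity as well, so that $r_* s_* = \id_{\bar{C}_*}$ with no homotopy required. The opposite composite is genuinely not the identity: one computes $s_k r_k(c) - c = -\lambda^{-1}\langle c, c_1\rangle\,\partial_{k+1}(c_2)$ and $s_{k+1}r_{k+1}(c) - c = -\lambda^{-1}\langle \partial_{k+1}(c), c_1\rangle\, c_2$, with $s_l r_l = \id$ in all other degrees. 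The crux of the argument, and the step I expect to demand the most care, is producing the chain homotopy; I would take $\phi_k(c) = -\lambda^{-1}\langle c, c_1\rangle\, c_2$ and $\phi_l = 0$ for $l \neq k$, then verify $\partial_{l+1}\phi_l + \phi_{l-1}\partial_l = s_l r_l - \id_l$ degree by degree. In degree $k$ the left-hand side is $\partial_{k+1}\phi_k(c) = -\lambda^{-1}\langle c, c_1\rangle\,\partial_{k+1}(c_2)$; in degree $k+1$ it is $\phi_k\partial_{k+1}(c) = -\lambda^{-1}\langle \partial_{k+1}(c), c_1\rangle\, c_2$; and in every other degree both terms vanish. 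These coincide with the differences computed above, which establishes $s_* r_* \simeq \id_{C_*}$ and, together with $r_* s_* = \id_{\bar{C}_*}$, shows that $r_*$ and $s_*$ are homotopy-inverse.
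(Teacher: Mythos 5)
Your proof is correct and follows essentially the same route as the paper's: the same degree-by-degree chain-map verification, the identity $r_*s_* = \id_{\bar{C}_*}$ holding on the nose, and the same homotopy $\phi$ supported in degree $k$ (your $\phi_k(c) = -\lambda^{-1}\langle c, c_1\rangle c_2$ is the paper's up to the sign forced by writing $s_*r_* - \id$ rather than $\id - s_*r_*$). The only addition is your explicit check that $r_k$ and $r_{k+1}$ actually land in $\bar{C}_k$ and $\bar{C}_{k+1}$, which the paper leaves implicit.
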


This latter combined with the fact that the function $v$ is a valid value function for $\bar{C}_*$ inducing a bifiltered chain complex $\bar{C}$ and, for any $p\in \R^2$, the restrictions $r^p_l:C^p_l \rightarrow \bar{C}^p_l$, $s^p_l:\bar{C}^p_l \rightarrow C^p_l$ of the maps $r_l$ and $s_l$, as well as the restrictions of the maps ensuring that $r_*$ and $s_*$ are homotopy-inverse, are well-defined (see Appendix \ref{sec:app_details_1}), guarantees the homotopy-equivalence between the two bifiltered chain complexes $C$ and $\bar{C}$.

\begin{theorem}
  $C$ and $\bar{C}$ are homotopy-equivalent.
\end{theorem}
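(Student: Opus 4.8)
The plan is to follow the same template as the proof of Theorem~\ref{thm:column_add}, assembling the already-established global statement with its filtration-level refinements into the definition of homotopy-equivalence. By the preceding proposition, $r_*$ and $s_*$ are chain maps between the full complexes $C_*$ and $\bar{C}_*$ that are homotopy-inverse to one another; concretely, there are chain homotopies witnessing that $s_* r_*$ and $r_* s_*$ are chain-homotopic to $\id_{C_*}$ and $\id_{\bar{C}_*}$, respectively. I would take these four ingredients, namely $r_*$, $s_*$, and the two homotopies, as the starting data, with $f=r$ and $g=s$ playing the roles in the definition.

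First I would invoke the well-definedness statements deferred to Appendix~\ref{sec:app_details_1}: for each $p\in\R^2$, the restrictions $r^p_l:C^p_l\to\bar{C}^p_l$ and $s^p_l:\bar{C}^p_l\to C^p_l$ land in the correct filtration pieces, and likewise the two chain homotopies restrict to maps $C^p_l\to C^p_{l+1}$ and $\bar{C}^p_l\to\bar{C}^p_{l+1}$. Granting this, the restricted maps $r^p_*$ and $s^p_*$ are chain maps between $C^p_*$ and $\bar{C}^p_*$, since the chain-map identities hold globally and are inherited by restriction, and the restricted homotopies witness that $s^p_* r^p_*\simeq\id_{C^p_*}$ and $r^p_* s^p_*\simeq\id_{\bar{C}^p_*}$. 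This is exactly the per-$p$ homotopy-inverse requirement.

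Second I would check commutativity with the inclusion maps. Because $r_*$ and $s_*$ are each defined by a single formula independent of $p$, every $r^p_l$ and $s^p_l$ is merely the restriction of one fixed linear map; hence applying $r$ and then including into $\bar{C}^q_l$ agrees with including into $C^q_l$ and then applying $r$, for any $p\leq q$, and symmetrically for $s$. This yields the required commuting squares for both $f=r$ and $g=s$, completing the verification of every condition in the definition of homotopy-equivalence.

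The only genuine content, and hence the main obstacle, is the filtration compatibility I would quote from the appendix: one must verify that $r_l$, $s_l$, and both homotopy operators never raise the value of a chain above its original value. The critical case is the summand $-\lambda^{-1}\langle c, c_1\rangle\,\partial_{k+1}(c_2)$ appearing in $r_k$, which could a priori send a chain of value $p$ to one supported on generators of larger value. This is controlled precisely because $(c_1,c_2)$ is a local pair, so $v(c_1)=v(c_2)$: whenever $\langle c, c_1\rangle\neq 0$ the generator $c_1$ lies in the support of $c$, giving $v(c_2)=v(c_1)\leq v(c)$, and every generator in $\partial_{k+1}(c_2)$ therefore has value at most $v(c_2)\leq v(c)$. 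Once this value bound is in hand for each of the defining formulas, all the restrictions are well-defined and the assembly above goes through.
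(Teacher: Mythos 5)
Your proposal is correct and takes essentially the same route as the paper: it assembles the proposition that $r_*$ and $s_*$ are homotopy-inverse chain maps with the appendix lemmas showing that $v$ remains a valid value function and that $r_l$, $s_l$, and the homotopy $\phi_l$ restrict to each filtration level, with commutativity with inclusions being automatic since all maps are restrictions of globally defined linear maps. Your identification of the critical value bound, namely $v(\partial_{k+1}(c_2))\leq v(c_2)=v(c_1)\leq v(c)$ whenever $c_1$ lies in the support of $c$, which hinges on the locality $v(c_1)=v(c_2)$ of the pair, is exactly the argument the paper gives in Appendix~\ref{sec:app_details_1}.
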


\section{Optimality and complexity}\label{sec:optimal}

\subparagraph{Optimality.}
Let $D$ be a bifiltered chain complex.
For $p\in\R^2$, we define
\[D^{<p}_*:=\sum_{q<p} D^q_*,\]
where the sum of chain complexes, analogously to the sum of the vector spaces, is the chain complex spanned by the union of the generators of the summands.
Moreover, let $\eta^p_k$ be the homology map in dimension $k$ induced by the inclusion of $D^{<p}_*$ into $D^{p}_*$.
We denote the number of variations in the $k^{th}$ homology space occurred at value $p$ as
\[\delta^p_k(D):=\dim\,\ker \, \eta^p_{k-1} + \dim\,\coker \, \eta^p_k,\]
and the number of $k$-generators added at value $p$ in $D$ as
\[\gamma^p_k(D):=\dim\,D^p_k - \dim\,D^{<p}_k.\]

\begin{theorem}\label{thm:optimality}
  Let $\bar{C}$ be the bifiltered chain complex obtained by applying the chunk algorithm to the bifiltered chain complex $C$. We have that $\delta^p_k(C)=\gamma^p_k(\bar{C})$.
\end{theorem}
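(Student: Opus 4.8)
The plan is to recognize $\delta^p_k(D)$ as the dimension of a relative homology group and then to split the argument into an invariance statement and a structural statement about the output. Writing $X=D^p_*$ and $A=D^{<p}_*$, the inclusion $A\hookrightarrow X$ is degreewise split (the generators of value exactly $p$ form a complementary basis), so the long exact sequence of the pair gives a short exact sequence
\begin{equation*}
0\to \coker\eta^p_k \to H_k(X/A)\to \ker\eta^p_{k-1}\to 0,
\end{equation*}
whence $\dim H_k(D^p_*/D^{<p}_*)=\dim\coker\eta^p_k+\dim\ker\eta^p_{k-1}=\delta^p_k(D)$. Thus I would reduce the theorem to the two claims: (i) $\delta^p_k$ is unchanged when passing from $C$ to $\bar C$, and (ii) for the output $\bar C$ one has $\dim H_k(\bar C^p_*/\bar C^{<p}_*)=\gamma^p_k(\bar C)$.

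For (ii), I would first note that the quotient complex $\bar C^p_*/\bar C^{<p}_*$ has $k$-th space of dimension exactly $\gamma^p_k(\bar C)$, spanned by the generators of value precisely $p$. The key structural fact is that in $\bar C$ every (necessarily global) generator $c$ with $v(c)=p$ has all of its boundary generators of value strictly smaller than $p$. Indeed, a column is labelled global at the end of Phase~I exactly when it has no local pivot, i.e.\ no boundary generator of value equal to its own; I would then check that Phases~II and~III preserve this property, since every column addition $c\gets c+\lambda c''$ performed during compression uses a local negative $c''$ whose value equals that of a boundary generator $c'$ of the global column $c$, and $v(c')\le v(c)$ together with $v(c')\neq v(c)$ forces all generators introduced from $\partial c''$ to have value strictly below $v(c)$. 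Consequently the induced differential on $\bar C^p_*/\bar C^{<p}_*$ vanishes, so its homology equals the space itself and $\dim H_k(\bar C^p_*/\bar C^{<p}_*)=\gamma^p_k(\bar C)$.

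For (i), I would exploit that $\bar C$ is obtained from $C$ by a sequence of the two elementary operations, and argue that each one preserves $\dim H_k(D^p_*/D^{<p}_*)$. An order-preserving column addition yields a value-preserving isomorphism of bifiltered complexes, which restricts to every sublevel and hence trivially preserves the relative homology. For the removal of a local pair $(c_1,c_2)$ with $v(c_1)=v(c_2)=w$, the maps $r_*,s_*$ and the associated chain homotopy are value-preserving: each sends a chain of value $\le t$ to a chain of value $\le t$, because the only correction terms involve $c_2$ and $\partial_{k+1}(c_2)$, all of value $\le w$, and these corrections are triggered only on chains containing $c_1$ or $c_2$, hence of value $\ge w$. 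Therefore $r_*,s_*$ restrict simultaneously to homotopy equivalences $C^p_*\simeq\bar C^p_*$ and $C^{<p}_*\simeq\bar C^{<p}_*$ compatibly with the inclusions, inducing a quasi-isomorphism on the quotient complexes and hence equality of $\dim H_k(D^p_*/D^{<p}_*)$. Chaining the operations gives $\delta^p_k(C)=\delta^p_k(\bar C)$, and combining with (ii) yields $\delta^p_k(C)=\gamma^p_k(\bar C)$.

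I anticipate the main obstacle to be the invariance step (i), and specifically the treatment of the strict sublevel $C^{<p}_*$: because $\{q:q<p\}$ is not directed, $H_k(C^{<p}_*)$ is genuinely a colimit over an incoherent poset and is not recoverable from the pointwise homologies and their comparison maps alone, so homology-equivalence would not suffice and the full homotopy-equivalence of Section~\ref{sec:correct} is essential. The delicate point is to verify, from the explicit formulas, that the homotopy-inverse maps and the homotopy for a local-pair removal are value-preserving and therefore restrict to $C^{<p}_*$ as well as to $C^p_*$; this is what upgrades the pointwise homotopy equivalence to an equivalence of pairs and makes the relative homology, and thus $\delta^p_k$, an invariant.
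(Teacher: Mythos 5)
Your proof is correct, and it takes a genuinely different route from the paper's. The paper's own argument (Theorem~\ref{thm:optimality2} in Appendix~\ref{sec:app_details_2}) never invokes relative homology or the homotopy-equivalence machinery: it orders the boundary matrix of $C^p_*$ with the columns of $C^{<p}_*$ first, uses the standard one-parameter reduction (Lemma~\ref{lemma:matrixReduction}) to express $\dim\,\ker\,\eta^p_{k-1}$ and $\dim\,\coker\,\eta^p_k$ as pivot counts, and then observes that Phase~I is a left-to-right partial reduction of that matrix; invariance of pivots under left-to-right additions forces every local pair to stay paired inside its chunk (contributing to neither count), while every global column of value $p$ ends up either with pivot in $C^{<p}_*$ (contributing to the kernel) or zeroed and unpaired within value $p$ (contributing to the cokernel), so the number of global $k$-columns of value $p$ equals $\delta^p_k(C)$ and, by construction, equals $\gamma^p_k(\bar C)$. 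You instead identify $\delta^p_k(D)$ with $\dim H_k(D^p_*/D^{<p}_*)$, transport this quantity from $C$ to $\bar C$ along the elementary operations of Section~\ref{sec:correct}, and evaluate it on $\bar C$, where the relative differential vanishes. Both arguments are sound, and the trade-offs differ. The paper's proof is self-contained given standard persistence theory, works directly on the input complex, and explains the local/global labels in terms of the persistence pairing of $C^{<p}_*\hookrightarrow C^p_*$. Your proof leans on the correctness theorem and genuinely needs more than what Appendix~\ref{sec:app_details_1} records: there only $v(f_l(c))\leq v(c)$, $v(r_l(c))\leq v(c)$, etc.\ are proven, which yields restriction to every $C^q_*$ but not to $C^{<p}_*$, because membership in $C^{<p}_*$ is a condition on the support of a chain rather than on its value ($C^{<p}_*$ is not of the form $C^q_*$, the set $\{q<p\}$ being non-directed). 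You correctly isolate this as the delicate point, and your trigger argument (the correction terms involve only $c_1$, $c_2$, $\partial_{k+1}(c_2)$, and fire only on chains whose support, or whose boundary's support, already contains $c_1$ or $c_2$) does close it. What your route buys in exchange is conceptual clarity: it exhibits $\delta^p_k$ as an invariant of homotopy equivalences of pairs, makes transparent why mere homology-equivalence would not suffice (consistent with the remark in Section~\ref{sec:concl}), and your step~(ii) is exactly the assertion that the output attains equality in Equation~(\ref{eq:ker-coker}), the inequality on which Theorem~\ref{lemma:leq} rests.
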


The full proof is given in Appendix \ref{sec:app_details_2}
and can be summarized as
follows. Every global column with value $p$
either destroys a homology class of $H(C^{<p})$,
or it creates a new homology class in $H(C^p)$, which is not destroyed
by any other column of value $p$. Hence, each global column contributes
a generator to $\ker \, \iota^p_{k-1}$ or to $\coker \, \iota^p_k$, where $\iota^p_l$ is the map between the $l^{th}$ homology spaces induced by the inclusion of $C^{<p}_*$ into $C^{p}_*$.
Local columns do not contribute to either of these two spaces.
The result follows from the fact that the number of global columns
at value $p$ is precisely the number of generators added at $\bar{C}$.

The next statement shows that our construction is optimal
in the sense that any bifiltered chain complex $D$ that is quasi-isomorphic to $C$ must have at least as many generators as $\bar{C}$.

\begin{theorem}\label{lemma:leq}
Any bifiltered chain complex $D$ quasi-isomorphic to $C$ has to add at least $\delta^p_k(C)$ $k$-generators at value $p$. I.e., $\delta^p_k(C)\leq \gamma^p_k(D)$.
\end{theorem}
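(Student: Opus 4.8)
The plan is to deduce the inequality from two separate facts. First, that $\delta^p_k$ is an invariant of the quasi-isomorphism type, so that $\delta^p_k(C)=\delta^p_k(D)$; and second, that for \emph{any} bifiltered chain complex $D$ one has the general bound $\delta^p_k(D)\leq\gamma^p_k(D)$. Chaining these gives $\delta^p_k(C)=\delta^p_k(D)\leq\gamma^p_k(D)$, which is exactly the assertion. I would isolate the two facts as lemmas and prove them independently.

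For the general bound I would exploit the short exact sequence of chain complexes $0\to D^{<p}_*\to D^p_*\to Q_*\to 0$, where $Q_*:=D^p_*/D^{<p}_*$ is well-defined because $D^{<p}_*$ is a subcomplex of $D^p_*$ (it is spanned by the generators of value strictly below $p$). The induced long exact sequence in homology reads
\[\cdots\to H_k(D^{<p})\xrightarrow{\eta^p_k}H_k(D^p)\to H_k(Q)\to H_{k-1}(D^{<p})\xrightarrow{\eta^p_{k-1}}H_{k-1}(D^p)\to\cdots,\]
and exactness extracts a short exact sequence $0\to\coker\eta^p_k\to H_k(Q)\to\ker\eta^p_{k-1}\to 0$. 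Hence $\dim H_k(Q)=\dim\coker\eta^p_k+\dim\ker\eta^p_{k-1}=\delta^p_k(D)$. Since a homology space is a subquotient of its chain group, $\delta^p_k(D)=\dim H_k(Q)\leq\dim Q_k=\dim D^p_k-\dim D^{<p}_k=\gamma^p_k(D)$, which is the desired bound.

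For the invariance I would use the chain maps $f^q_*\colon C^q_*\to D^q_*$ supplied by the quasi-isomorphism. Their compatibility with the inclusions lets me glue the maps over all $q<p$ into a single chain map $f^{<p}_*\colon C^{<p}_*\to D^{<p}_*$ that fits, together with $f^p_*$ and the inclusions $C^{<p}_*\hookrightarrow C^p_*$ and $D^{<p}_*\hookrightarrow D^p_*$, into a commutative square. If $f^{<p}_*$ is a quasi-isomorphism, then, since $f^p_*$ is one by hypothesis and the square commutes, the maps $\eta^p_k$ for $C$ and for $D$ are conjugate by isomorphisms; conjugate maps have isomorphic kernels and cokernels, so $\delta^p_k(C)=\delta^p_k(D)$ follows immediately.

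The main obstacle is precisely showing that $f^{<p}_*$ is a quasi-isomorphism. I would phrase this via mapping cones: each $f^q_*$ is a quasi-isomorphism, so $E^q_*:=\mathrm{Cone}(f^q_*)$ is acyclic, and $\mathrm{Cone}(f^{<p}_*)=\sum_{q<p}E^q_*$. The difficulty is that the index poset $\{q:q<p\}$ is \emph{not} directed (two values below $p$ may join to $p$ itself), so acyclicity does not pass to the sum by a naive colimit argument. Instead I would argue by a Mayer--Vietoris induction over the finitely many generator-values $q_1,\dots,q_m<p$. The crucial observation is that each $E^q_*$ is a coordinate subcomplex, spanned by a subset of the fixed generating set of the cone, so intersections and sums of these subcomplexes behave like the corresponding set operations on index sets; in particular $E^q_*\cap E^{q'}_*=E^{q\wedge q'}_*$, with $q\wedge q'$ the componentwise minimum, and again $q\wedge q'<p$. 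Writing $A_*=\sum_{i<m}E^{q_i}_*$ and $B_*=E^{q_m}_*$, one gets $A_*\cap B_*=\sum_{i<m}E^{q_i\wedge q_m}_*$, a sum of strictly fewer acyclic coordinate subcomplexes indexed below $p$; by the induction hypothesis $A_*$ and $A_*\cap B_*$ are acyclic, $B_*$ is acyclic, and the Mayer--Vietoris sequence then forces $A_*+B_*=\sum_{i\le m}E^{q_i}_*$ to be acyclic. Thus $\mathrm{Cone}(f^{<p}_*)$ is acyclic, $f^{<p}_*$ is a quasi-isomorphism, and the invariance is established.
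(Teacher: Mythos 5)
Your proof is correct, and its skeleton --- $\delta^p_k(C)=\delta^p_k(D)\leq\gamma^p_k(D)$ --- is exactly the paper's (Appendix~\ref{sec:app_details_2}); however, both halves are carried out by genuinely different arguments. For the universal bound $\delta^p_k(D)\leq\gamma^p_k(D)$, the paper argues only informally that each generator added at value $p$ can increase $\dim\,\ker\,\eta^p_{k-1}+\dim\,\coker\,\eta^p_k$ by at most one; your identification $\delta^p_k(D)=\dim H_k(D^p_*/D^{<p}_*)\leq \dim\,D^p_k-\dim\,D^{<p}_k$ via the long exact sequence of the pair is a cleaner, fully rigorous version of that step. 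For the invariance $\delta^p_k(C)=\delta^p_k(D)$, both proofs must show that the glued map $f^{<p}_*$ induces isomorphisms on homology, and both must confront the fact that $\{q<p\}$ is not directed. The paper does this in a way tailored to two parameters: finite generation lets it write $D^{<p}_*=D^{p_1}_*+D^{p_2}_*$ with $D^{p_1}_*\cap D^{p_2}_*=D^{p_0}_*$, and it then maps the Mayer--Vietoris sequence of $(C^{p_1}_*,C^{p_2}_*)$ to that of $(D^{p_1}_*,D^{p_2}_*)$ and invokes the $5$-lemma. You instead pass to mapping cones $E^q_*=\mathrm{Cone}(f^q_*)$ and run a Mayer--Vietoris induction on the number of summands, resting on the observation that the $E^q_*$ are coordinate subcomplexes, so $E^q_*\cap E^{q'}_*=E^{q\wedge q'}_*$ and intersection distributes over sums --- the same structural fact the paper uses (under the name $1$-criticality) to get $D^{p_0}_*=D^{p_1}_*\cap D^{p_2}_*$. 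What your route buys: it avoids the $5$-lemma, and it is insensitive to the number of parameters, since an arbitrary finite, non-directed sum of the $E^q_*$ is handled uniformly; the paper's two-term decomposition is specific to $\R^2$ and for $d>2$ parameters would need an iterated or $d$-term Mayer--Vietoris argument, so your proof actually substantiates the paper's claim that its results generalize to more parameters. What the paper's route buys: in the two-parameter case it is shorter and stays at the level of homology of the original complexes, never introducing cones. One bookkeeping point you should make explicit: it is finite generation of the complexes that lets you replace $\sum_{q<p}E^q_*$ by a finite sum $E^{q_1}_*+\dots+E^{q_m}_*$ (and guarantees representatives $q_i<p$ exist), exactly as the paper uses it to obtain its two-term sum.
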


The full proof is also given in Appendix \ref{sec:app_details_2}.
To summarize it,
it is not too hard to see that, for any bifiltered chain complex $D$,
\[\delta^p_k(D)\leq \gamma^p_k(D)\]
holds. Moreover, the quasi-isomorphism of $C$ and $D$ implies that $\dim\,\ker \, \eta^p_{k-1}=\dim\,\ker \, \iota^p_{k-1}$
and $\dim\,\coker \, \eta^p_k=\dim\,\coker \, \iota^p_k$. So,
\[\delta^p_k(C)= \delta^p_k(D)\]
which implies that claim. The equality of the dimensions
is formally verified using the Mayer-Vietoris sequence and the $5$-lemma
to establish an isomorphism from $H_k(C^{<p}_*)$ to $H_k(D^{<p}_*)$
that commutes with the isomorphism at value $p$.

\subparagraph{Complexity.}
In order to properly express the time and the space complexity of the proposed algorithm, let us introduce the following parameters. Given a bifiltered chain complex $C$, we denote as $n$ the number of generators of $C$, as $m$ the number of chunks (i.e., the number of different values assumed by $v$), as $\ell$ the maximal size of a chunk, and as $g$ the number of global columns. Moreover, we assume the maximal size of the support of the boundary of the generators of $C$ as a constant. The latter condition is always ensured for bifiltered simplicial complex of fixed dimension.

\begin{theorem}
The chunk algorithm has time complexity $O(m\ell^3\log \ell+g\ell n\log\ell)$ and space complexity $O(n\ell + g^2)$.
\end{theorem}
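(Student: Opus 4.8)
The plan is to bound each of the three phases separately, resting on a single structural observation that controls the size of every column. \emph{Claim:} after Phase~I, every column (local or global) equals a linear combination of at most $\ell$ of the original columns of its own chunk, because Phase~I performs additions only between columns of equal value; since each original boundary has constant support, every reduced column has support $O(\ell)$. I would charge each column addition at $O(\ell\log\ell)$, using the column data structure of~\cite{Bauer2017}, which merges two columns of support $O(\ell)$ and extracts the maximal-index entry at a logarithmic cost per entry. For \textbf{Phase~I}, I would then argue chunk by chunk: a chunk has at most $\ell$ columns, and reducing it is an ordinary matrix reduction in which each column triggers at most $\ell$ additions before it loses its local pivot or gets paired, giving $O(\ell^2)$ additions per chunk; at $O(\ell\log\ell)$ per addition this is $O(\ell^3\log\ell)$ per chunk, and summing over the $m$ chunks yields the first term $O(m\ell^3\log\ell)$.

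For \textbf{Phase~II}, I would analyze one global column $c$ at a time. The termination argument already given in the text shows that the maximal index of a local generator in the boundary of $c$ strictly decreases at each step; as indices range over at most $n$ generators, $c$ undergoes $O(n)$ compression steps. Each step either deletes one local-negative entry or adds the local-negative column $c''$, whose support is $O(\ell)$ by the structural claim, at cost $O(\ell\log\ell)$. Hence compressing one global column costs $O(n\ell\log\ell)$, and over all $g$ global columns the total is $O(g\ell n\log\ell)$, the second term. \textbf{Phase~III} merely scans all columns and discards the local ones in $O(n)$ time, which is absorbed by the preceding terms.

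For the \textbf{space} bound, I would observe that after Phase~I the $O(n)$ stored columns each have support $O(\ell)$, accounting for $O(n\ell)$; note that global columns are never used to reduce others in Phase~I, so they too satisfy the $O(\ell)$ support bound. Compression then clears every local entry from a global column, leaving its boundary supported on the $\le g$ global generators, i.e.\ support $O(g)$ per column and $O(g^2)$ in total. The transient enlargement of a single global column during compression is bounded by the number of $(k-1)$-generators of value at most $v(c)$, which does not exceed the $O(n\ell)$ already allocated. The peak storage is therefore $O(n\ell+g^2)$.

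The step I expect to be the main obstacle is not any single counting estimate but making the support bounds precise enough that the data-structure cost per column operation comes out as $O(\ell\log\ell)$: concretely, establishing rigorously that reduced local columns and the columns $c''$ added during compression stay of support $O(\ell)$, and controlling the logarithmic factor incurred when the maximal-index entry is extracted from a global column that may itself have grown beyond size $\ell$ during Phase~II. Once these support bounds are nailed down, the addition counts in each phase and the additive space accounting are routine.
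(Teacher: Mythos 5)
Your proposal is correct and follows essentially the same route as the paper: the paper obtains the time bound by appealing to the analysis of the first two steps of the one-parameter chunk algorithm~\cite{Bauer2014} (with an extra $\log\ell$ factor attributed to priority-queue columns), which is precisely the per-chunk $O(\ell^2)$-additions and per-global-column $O(n)$-steps counting you reconstruct, and its space argument ($O(n\ell)$ total after Phase~I via the same support-$O(\ell)$ structural claim, $O(g^2)$ after compression, with only transient growth in the column currently being compressed) is the one you give. The one loose point you flag --- whether the per-operation cost in Phase~II is $O(\log\ell)$ or $O(\log n)$ once a global column has accumulated up to $O(n\ell)$ lazy entries --- is asserted without justification in the paper's own proof as well, so it is a shared imprecision rather than a gap specific to your argument.
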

\begin{proof}
Due to the similarity between the two algorithms, the analysis of complexity of the proposed algorithm is analogous to the first two steps  of the one-parameter chunk algorithm \cite{Bauer2014}.
The additional factor of $\log\ell$ comes from our choice of using priority queues as column type and could be removed by using list representations as in~\cite{Bauer2014}.%
\footnote{We remark, however, that this would result in a performance penalty in practice. See~\cite{Bauer2017}.}

On the space complexity, during the Phase I, the generators in the boundary of any column can be at most $O(\ell)$. So, $O(n\ell)$ is a bound on the accumulated size of all columns
after Phase I.
During Phase II, the boundary of any global column can consist of up to $n$ generators, but reduces to $g$ generators at the end of the compression of the column
because all local entries have been removed. Hence, the final chain complex has at most $g$ entries in each of its $g$ columns, and requires $O(g^2)$ space.
Hence, the total space complexity is $O(n\ell+n+g^2)$, where the second summand is redundant.\footnote{We remark that this bound only holds for the sequential version of the
algorithm. In a parallelized version, it can happen that several compressed columns achieve a size of $O(n)$ at the same time.}
\end{proof}

\section{Implementation and experimental results}\label{sec:impl}

We briefly introduce the developed implementation of the chunk algorithm and we evaluate its performance.
The current C++ implementation of the chunk algorithm consists of approximately 350 lines of code. It takes as input a finite bifiltered chain complex expressed accordingly to the representation described in Section \ref{sec:back} and encodes each column as a  \texttt{std::priority\_queue}. Even if the chunk algorithm permits a parallelization in shared memory, this first implementation does not exploit this potential. Our experiments have been performed on a configuration Intel Xeon CPU E5-1650 v3 at 3.50 GHz with 64 GB of RAM.

For testing the implemented chunk algorithm, we have compared its performances with the simplification process based on discrete Morse theory proposed in \cite{Scaramuccia2018} (DMT-based algorithm) whose implementation is publicly available \cite{Iuricich2018}.
We remark once more that the DMT-based approach yields a somewhat richer
output than solely a simplified chain complex with the same homotopy;
however, we only compare the size of the resulting structure
(in terms of the number of generators per chain group) in this
experimental comparison.

In our experiments, we have considered both synthetic and real datasets represented as simplicial complexes. The latter ones are courtesy of the AIM@SHAPE data repository \cite{aimatshape2006}.
Most of the datasets are of dimension 2 or 3 and are embedded in a 3D environment. For our experiments, we have adopted as filtering functions the ones obtained by extending to all the simplices of the complex the $x$ and the $y$ coordinates assigned to the vertices of the datasets.
Table \ref{table:experiments} displays the achieved results.

The data sets are given in off file format, that means,
as a list of triangles specified by boundary vertices.
In order to apply our algorithm, we first have to convert the data
into a boundary matrix representation. Hence, we enlist
in Table~\ref{table:experiments} the \emph{preparation time} to create the boundary matrix and the \emph{simplification time} to perform our chunk algorithm. In contrast, the DMT-based approach
avoids the initial construction of the boundary matrix but transforms
the input into a different data structure before starting its simplification
step. We also list the running times of these separate steps in
Table~\ref{table:experiments}.
In both cases, we do not list the time for
reading the input file into memory and writing the output structure on disk.

\begin{table}[!htb]
\resizebox{\columnwidth}{!}{%
\centering
\begin{tabular}{l | c c | c c c c | c c }
	\multirow{3}{*}{Dataset} &
  \multicolumn{2}{c|}{Size} &
	\multicolumn{4}{c|}{Time (sec.)} &
  \multicolumn{2}{c}{Memory Usage (GB)} \\
	& \multirow{2}{*}{Input} & \multirow{2}{*}{Output} & \multicolumn{2}{c|}{Chunk} & \multicolumn{2}{c|}{DMT} & \multicolumn{1}{c|}{\multirow{2}{*}{\, Chunk \,}} & \multirow{2}{*}{DMT} \\
& & & Prep. & \multicolumn{1}{c|}{Simpl.} & Prep. & Simpl. & \multicolumn{1}{c|}{ } & \\
   \hline
  Eros & 2.9 M & 202 K & 1.7 & \multicolumn{1}{c|}{0.8} & 2.7 & 15.8 & \multicolumn{1}{c|}{0.36} & 0.46 \\
  Donna & 3.0 M & 217 K & 1.8 & \multicolumn{1}{c|}{0.8} & 2.8 & 16.9 & \multicolumn{1}{c|}{0.38} & 0.48 \\
  Chinese Dragon & 3.9 M & 321 K & 2.5 & \multicolumn{1}{c|}{1.1} & 3.9 & 22.3 & \multicolumn{1}{c|}{0.52} & 0.64 \\
  Circular Box & 4.2 M & 365 K & 2.9 & \multicolumn{1}{c|}{1.2} & 4.3 & 24.0 & \multicolumn{1}{c|}{0.68} & 0.68 \\
  Ramesses & 5.0 M & 407 K & 3.4 & \multicolumn{1}{c|}{1.3} & 5.5 & 29.4 & \multicolumn{1}{c|}{0.68} & 0.81 \\
  Pensatore & 6.0 M & 369 K & 3.8 & \multicolumn{1}{c|}{1.6} & 6.8 & 34.3 & \multicolumn{1}{c|}{0.76} & 0.97 \\
  Raptor & 6.0 M & 260 K & 4.4 & \multicolumn{1}{c|}{1.7} & 5.4 & 32.3 & \multicolumn{1}{c|}{0.73} & 0.93 \\
  Neptune & 12.0 M & 893 K & 8.4 & \multicolumn{1}{c|}{4.4} & 14.9 & 69.2 & \multicolumn{1}{c|}{1.52} & 1.94 \\
  % \hline
  Cube 1 & 590 K & 67 K & 0.5 & \multicolumn{1}{c|}{0.3} & 0.7 & 3.2 & \multicolumn{1}{c|}{0.09} & 0.10 \\
  Cube 2 & 2.4 M & 264 K & 1.8 & \multicolumn{1}{c|}{1.1} & 2.6 & 13.1 & \multicolumn{1}{c|}{0.35} & 0.40 \\
  Cube 3 & 9.4 M & 1.0 M & 7.6 & \multicolumn{1}{c|}{4.8} & 11.0 & 53.1 & \multicolumn{1}{c|}{1.37} & 1.58 \\
  Cube 4 & 37.7 M & 4.2 M & 31.9 & \multicolumn{1}{c|}{19.4} & 44.9 & 216.0 & \multicolumn{1}{c|}{5.50} & 6.32
\end{tabular}}
\medskip
\caption{Results and performances obtained by the chunk and the DMT-based algorithms. The columns from left to right indicate: the name of the dataset ({\em Dataset}), the number of cells before and after the simplification algorithms ({\em Size}), the time ({\em Time}), expressed in seconds, needed to perform the preparation step ({\em Prep.}) and the simplification step ({\em Simpl.}), the maximum peak of memory, expressed in GB, required by the two algorithms ({\em Memory Usage}).}
\label{table:experiments}
\end{table}

The column {\em Size} in Table~\ref{table:experiments} collects the sizes (in terms of the number of simplices/columns) of the bifiltered chain complexes in input and in output.
The compression factor achieved by both algorithms varies between 9 and 23. As average, the reduced chain complex in output is approximately 13 times smaller than the original one.
Despite of the theoretical advantage that our approach
yields an optimal size in all situations, the size of the output returned by the two algorithms is nearly the same in all listed examples. A difference can be noticed just for datasets including shapes that can be considered as ``pathological''. For instance, an example of such a dataset is the conification of a dunce hat.

The column {\em Time} shows the computation times of both algorithms.
In all tested examples, the chunk algorithm takes a small fraction of time with respect to the DMT-based approach.
We also see that the creation of the boundary matrix
(preparation step) takes more time
than the chunk algorithm
(simplification step) by a factor of $2$ to $3$, but even these two steps
combined are faster than the simplification step of the DMT-based approach. It is worth
investigating whether the boundary matrix creation becomes a more
severe bottleneck for other datasets (e.g., in higher dimension),
where the DMT-based approach might have advantages by not creating
the boundary matrix explicitly.

The column {\em Memory Usage} shows the memory consumption of both approaches. The chunk algorithm does not need to encode auxiliary structures like the discrete Morse gradient stored in the DMT-based approach resulting in a slightly less amount of required space. Depending on the dataset, the chunk approach requires between 0.09 and 5.50 GB of memory and it is, on average, 1.2 times more compact than the DMT-based one.

In summary, the chunk algorithm satisfies a stronger optimality condition than \cite{Scaramuccia2018}, but still is an order of magnitude faster and uses comparable memory.

\section{Conclusion}\label{sec:concl}
We have presented a pre-processing procedure for improving the computation of multi-parameter persistent homology and we have provided theoretical and experimental evidence of its effectiveness.
In the future, we want to further improve the proposed strategy as follows.
First, we would like to develop a parallel implementation with shared memory of the chunk algorithm and compare its performances with the ones obtained by the current version.
Similarly to \cite{Scaramuccia2018}, we also want to evaluate the impact of the chunk algorithm for the computation of the persistence module and of the persistence space.

Our optimality criterion is formulated for the class of chain complexes quasi-isomorphic to the input.
It is possible to produce counterexamples showing that this statement cannot be generalized to the class of the chain complexes homology-equivalent to the input. Nevertheless, we want to further investigate the optimality properties satisfied by our algorithm and compare our algorithm with the minimal presentation algorithm for persistence modules from RIVET.

%%
%% Bibliography
%%

%% Please use bibtex,

% \bibliography{MultiChunksBiblio}

\appendix
\section{Proofs from Section~\ref{sec:correct}}
\label{sec:app_details_1}

\subparagraph{Order-preserving column additions.}
In accordance with the notations adopted in the paragraph of Section \ref{sec:correct} devoted to the analysis of order-preserving column additions, we show here the detailed proofs of the mentioned results.

\begin{proposition}
  $\bar{C}_*$ is a chain complex.
\end{proposition}

\begin{proof}
 It is enough to prove that $\bar{\partial}_{l-1} \bar{\partial}_l=0$ for $l=k, k+1, k+2$.\\
 If $l=k$, then, thanks to the fact that $C_*$ is a chain complex,
 \begin{eqnarray*}
   \bar{\partial}_{k-1} \bar{\partial}_k(c)&=& \bar{\partial}_{k-1}(\partial_k(c) + \lambda \langle c, c_2 \rangle \partial_k (c_1))\\
   &=&\partial_{k-1}(\partial_k(c)) + \lambda \langle c, c_2 \rangle \partial_{k-1}(\partial_k (c_1))\\
   &=& 0.
 \end{eqnarray*}
If $l=k+1$, then, since $C_*$ is a chain complex and $\langle c_1, c_2 \rangle=0$,
\begin{eqnarray*}
  \bar{\partial}_{k} \bar{\partial}_{k+1}(c)&=& \bar{\partial}_{k}(\partial_{k+1}(c) - \lambda \langle \partial_{k+1} (c), c_2 \rangle c_1)\\
  &=& \partial_k(\partial_{k+1}(c)) + \lambda \langle \partial_{k+1}(c), c_2 \rangle \partial_k (c_1)
  - \lambda \langle \partial_{k+1} (c), c_2 \rangle ( \partial_k c_1 + \lambda \langle c_1, c_2 \rangle \partial_k (c_1) )\\
  &=& 0.
\end{eqnarray*}
If $l=k+2$, then, since $C_*$ is a chain complex,
\begin{eqnarray*}
  \bar{\partial}_{k+1} \bar{\partial}_{k+2}(c)&=& \partial_{k+1}(\partial_{k+2}(c)) - \lambda \langle \partial_{k+1}(\partial_{k+2}(c)), c_2 \rangle c_1\\
  &=& 0.
\end{eqnarray*}
\end{proof}

\begin{lemma}
  $f_*$ and $g_*$ are chain maps.
\end{lemma}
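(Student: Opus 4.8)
The plan is to verify directly the defining identity of a chain map in each dimension: $\bar{\partial}_l f_l = f_{l-1}\partial_l$ for $f_*$, and $\partial_l g_l = g_{l-1}\bar{\partial}_l$ for $g_*$. The observation that immediately cuts down the work is that both $f_l$ and $g_l$ coincide with the identity except when $l=k$, while $\partial_l$ and $\bar{\partial}_l$ agree except when $l\in\{k,k+1\}$. Hence for every $l\notin\{k,k+1\}$ all four maps in the relevant commutative square are either an identity or an unmodified boundary, so the identity holds trivially. Only the two dimensions $l=k$ and $l=k+1$ require genuine computation.

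For $f_*$ in dimension $l=k+1$, the map $f_{k+1}$ is the identity, so $\bar{\partial}_{k+1} f_{k+1}(c)=\bar{\partial}_{k+1}(c)=\partial_{k+1}(c)-\lambda\langle\partial_{k+1}(c),c_2\rangle c_1$, and this is exactly $f_k(\partial_{k+1}(c))$ by the definition of $f_k$ applied to the $k$-chain $\partial_{k+1}(c)$. In dimension $l=k$ I would expand $\bar{\partial}_k f_k(c)=\bar{\partial}_k\bigl(c-\lambda\langle c,c_2\rangle c_1\bigr)$ using $\bar{\partial}_k(x)=\partial_k(x)+\lambda\langle x,c_2\rangle\partial_k(c_1)$. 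This generates a correction term carrying a factor $\langle c_1,c_2\rangle$, and this is the one point where the standing hypothesis $\langle c_1,c_2\rangle=0$ is essential: it annihilates that quadratic cross-term, after which the surviving terms cancel to leave $\partial_k(c)=f_{k-1}\partial_k(c)$. The verification for $g_*$ is entirely parallel with the two nontrivial dimensions swapped: the case $l=k$ is immediate from $g_{k-1}=\id$, whereas in the case $l=k+1$ one expands $g_k\bar{\partial}_{k+1}(c)$ and again invokes $\langle c_1,c_2\rangle=0$ to clear the cross-term and recover $\partial_{k+1}(c)$.

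I expect no real obstacle beyond careful coefficient bookkeeping in these two modified dimensions. Conceptually the whole argument rests on the single algebraic fact that $\langle c_1,c_2\rangle=0$ forces the quadratic correction terms to vanish, so that the linear correction built into $f_k$ (respectively $g_k$) matches exactly the correction incorporated into $\bar{\partial}$. Since being a chain map is a purely algebraic condition, the value function $v$ plays no role at this stage; it re-enters only later, when the maps are restricted to the sublevel complexes $C^p_*$.
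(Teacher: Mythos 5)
Your proposal is correct and follows essentially the same route as the paper: a direct check of $\bar{\partial}_l f_l = f_{l-1}\partial_l$ and $g_{l-1}\bar{\partial}_l = \partial_l g_l$ in the only nontrivial dimensions $l=k$ and $l=k+1$, with the hypothesis $\langle c_1,c_2\rangle=0$ killing the quadratic cross-term exactly where you say it does (the $l=k$ case for $f_*$ and the $l=k+1$ case for $g_*$). Your added remark that all other dimensions are trivial because the maps reduce to identities and unmodified boundaries is implicit in the paper's ``it is enough to show'' and is a fine way to make it explicit.
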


\begin{proof}
  For proving that $f_*$ is a chain map is enough to show that $\bar{\partial}_l f_l = f_{l-1} \partial_l$ for $l=k, k+1$.\\
  If $l=k$, then
  \begin{eqnarray*}
    \bar{\partial}_k f_k(c) &=& \bar{\partial}_k(c - \lambda \langle c, c_2 \rangle c_1)\\
    &=& \partial_k(c) - \lambda \langle c, c_2 \rangle \partial_k(c_1) + \lambda \langle c-\lambda \langle c, c_2 \rangle c_1, c_2 \rangle \partial_k(c_1)\\
    &=& \partial_k(c) - \lambda \langle c, c_2 \rangle \partial_k(c_1) + \lambda \langle c, c_2 \rangle \partial_k(c_1) - \lambda^2 \langle c, c_2 \rangle \langle c_1, c_2 \rangle \partial_k(c_1)\\
    &=& \partial_k(c) = f_{k-1}(\partial_k(c)).
  \end{eqnarray*}
  If $l=k+1$, then
  \begin{eqnarray*}
    \bar{\partial}_{k+1} f_{k+1} (c) &=& \bar{\partial}_{k+1} (c) = \partial_{k+1}(c) - \lambda \langle \partial_{k+1} (c), c_2 \rangle c_1\\
    &=& f_k (\partial_{k+1}(c)).
  \end{eqnarray*}
  For proving that $g_*$ is a chain map is enough to show that $g_{l-1} \bar{\partial}_l = \partial_l g_l$ for $l=k, k+1$.\\
  If $l=k$, then
  \begin{eqnarray*}
    g_{k-1} \bar{\partial}_k(c)&=& g_{k-1} (\partial_k(c) + \lambda \langle c, c_2 \rangle \partial_k (c_1))\\
    &=& \partial_k(c) + \lambda \langle c, c_2 \rangle \partial_k (c_1) \\
    &=& \partial_k ( c + \lambda \langle c, c_2 \rangle c_1  )\\
    &=& \partial_k (g_k(c)).
  \end{eqnarray*}
  If $l=k+1$, then
  \begin{eqnarray*}
    g_k \bar{\partial}_{k+1} (c)&=& g_k ( \partial_{k+1}(c) - \lambda \langle \partial_{k+1} (c), c_2 \rangle c_1 )\\
    &=& \partial_{k+1}(c) - \lambda \langle \partial_{k+1} (c), c_2 \rangle c_1 + \lambda \langle \partial_{k+1}(c) - \lambda \langle \partial_{k+1} (c), c_2 \rangle c_1, c_2\rangle c_1\\
    &=& \partial_{k+1}(c) - \lambda \langle \partial_{k+1} (c), c_2 \rangle c_1 + \lambda \langle \partial_{k+1} (c), c_2 \rangle c_1 - \lambda^2 \langle \partial_{k+1} (c), c_2 \rangle \langle c_1, c_2 \rangle c_1\\
    &=& \partial_{k+1}(c) = \partial_{k+1}( g_{k+1} (c) ).
  \end{eqnarray*}
\end{proof}

\begin{proposition}
 $C_*$ and $\bar{C}_*$ are isomorphic.
\end{proposition}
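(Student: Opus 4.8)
The plan is to show that the chain maps $f_*$ and $g_*$ are mutually inverse, which combined with the preceding lemma (establishing that they are chain maps) immediately yields that they are isomorphisms of chain complexes. Concretely, I would verify the two identities $g_* f_* = \id_{C_*}$ and $f_* g_* = \id_{\bar{C}_*}$ degree by degree, since an invertible chain map whose inverse is also a chain map is precisely an isomorphism.

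First I would dispatch the trivial degrees: for every $l \neq k$ both $f_l$ and $g_l$ are, by definition, the identity on $C_l = \bar{C}_l$, so their composites in either order are the identity as well, and nothing needs to be checked there.

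The only substantive case is $l = k$, and I would compute $g_k(f_k(c))$ directly. Expanding $f_k(c) = c - \lambda \langle c, c_2 \rangle c_1$ and applying $g_k$ gives $f_k(c) + \lambda \langle f_k(c), c_2 \rangle c_1$. The decisive step is evaluating the pairing $\langle f_k(c), c_2 \rangle = \langle c, c_2 \rangle - \lambda \langle c, c_2 \rangle \langle c_1, c_2 \rangle$; here the standing hypothesis $\langle c_1, c_2 \rangle = 0$ collapses this to $\langle c, c_2 \rangle$, so the two correction terms cancel and $g_k(f_k(c)) = c$. The computation of $f_k(g_k(c)) = c$ is entirely symmetric, again relying on $\langle c_1, c_2 \rangle = 0$ to kill the cross term.

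There is no genuine obstacle here, as the argument is a short linear-algebra cancellation; the one point I would be careful to flag is that the hypothesis $\langle c_1, c_2 \rangle = 0$ is exactly what makes the inverse formulas work, and without it $f_k$ and $g_k$ would fail to be mutually inverse. With both composites equal to the identity in every degree, $f_*$ is a bijective chain map with chain-map inverse $g_*$, and hence $C_*$ and $\bar{C}_*$ are isomorphic.
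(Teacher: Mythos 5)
Your proposal is correct and follows essentially the same route as the paper: both verify $g_k f_k = \id_{C_k}$ and $f_k g_k = \id_{\bar{C}_k}$ by direct expansion, with the cross term killed by the standing hypothesis $\langle c_1, c_2 \rangle = 0$, and both rely on the preceding lemma that $f_*$ and $g_*$ are chain maps. Your explicit flagging of where $\langle c_1, c_2 \rangle = 0$ enters is a nice touch but not a departure from the paper's argument.
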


\begin{proof}
It is enough to show that $g_k f_k = \id_{C_k}$ and $f_k g_k = \id_{\bar{C}_k}$.
We have that
\begin{eqnarray*}
  g_k f_k(c)&=&g_k(c - \lambda \langle c, c_2 \rangle c_1)\\
  &=& c - \lambda \langle c, c_2 \rangle c_1 + \lambda \langle c - \lambda \langle c, c_2 \rangle c_1, c_2 \rangle c_1\\
  &=& c - \lambda \langle c, c_2 \rangle c_1 + \lambda \langle c, c_2 \rangle c_1 - \lambda^2 \langle c, c_2 \rangle \langle c_1, c_2 \rangle c_1\\
  &=& c.
\end{eqnarray*}
Analogously,
\begin{eqnarray*}
  f_k g_k(c)&=&f_k(c + \lambda \langle c, c_2 \rangle c_1)\\
  &=& c + \lambda \langle c, c_2 \rangle c_1 - \lambda \langle c + \lambda \langle c, c_2 \rangle c_1, c_2 \rangle c_1\\
  &=& c + \lambda \langle c, c_2 \rangle c_1 - \lambda \langle c, c_2 \rangle c_1 - \lambda^2 \langle c, c_2 \rangle \langle c_1, c_2 \rangle c_1\\
  &=& c.
\end{eqnarray*}
\end{proof}

\begin{lemma}
  The function $v$ is a valid value function for $\bar{C}_*$ inducing a bifiltered chain complex $\bar{C}$.
\end{lemma}

\begin{proof}
  We have to show, that, for any $c \in \bar{C}_l$, $v(\bar{\partial}_l(c))\leq v(c)$.
  For $l\neq k, k+1$, that is trivial. Let us prove the other cases, recalling that:
  \begin{itemize}
    \item $v(c_1)\leq v(c_2)$,
    \item for any $c \in C_l$, $v(\partial_l(c))\leq v(c)$.
  \end{itemize}
  If $l=k$, $\bar{\partial}_k(c)=\partial_k(c) + \lambda \langle c, c_2 \rangle \partial_k (c_1)$.
  If $\langle c, c_2 \rangle=0$, then $v(\bar{\partial}_k(c))= v(\partial_k(c)) \leq v(c)$. Otherwise,
  \begin{itemize}
    \item $v(\partial_k(c_1))\leq v(c_1) \leq v(c_2) \leq v(c)$,
    \item $v(\partial_k(c)) \leq v(c)$.
  \end{itemize}
  Then, $v(\bar{\partial}_k(c))\leq v(c)$.\\
  If $l=k+1$, $\bar{\partial}_{k+1}(c)=\partial_{k+1}(c) - \lambda \langle \partial_{k+1} (c), c_2 \rangle c_1$.
  If $\langle \partial_{k+1} (c), c_2 \rangle=0$, then $v(\bar{\partial}_{k+1}(c))= v(\partial_{k+1}(c)) \leq v(c)$. Otherwise, $v(c_1) \leq v(c_2) \leq v(\partial_{k+1} (c)) \leq v(c)$. Then, $v(\bar{\partial}_{k+1}(c))\leq v(c)$.
\end{proof}

\begin{lemma}
  For any $p\in \R^2$, the restrictions $f^p_l:C^p_l \rightarrow \bar{C}^p_l$ and $g^p_l:\bar{C}^p_l \rightarrow C^p_l$ of the maps $f_l$ and $g_l$, respectively, are well-defined.
\end{lemma}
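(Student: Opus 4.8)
The plan is to observe first that, since $\bar{C}_l=C_l$ as vector spaces and the value function $v$ is left unchanged by the construction, the corresponding filtration pieces coincide: $C^p_l=\bar{C}^p_l$ for every $p\in\R^2$ and every $l$. Hence the content of the lemma reduces to showing that $f_l$ carries $C^p_l$ into itself, and likewise $g_l$; concretely, that $v(c)\leq p$ implies $v(f_l(c))\leq p$ and $v(g_l(c))\leq p$.

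The reduction I would exploit is the elementary characterization that $v(c)\leq p$ holds if and only if every generator in the support $\mathrm{supp}(c)$ has value $\leq p$; this is immediate from the definition of $v(c)$ as the least common upper bound of the generator values. For the indices $l\neq k$ the maps $f_l$ and $g_l$ are the identity, so there is nothing to check. The only case is $l=k$, where $f_k(c)=c-\lambda\langle c,c_2\rangle c_1$. If $\langle c,c_2\rangle=0$ then $f_k(c)=c$ and the bound is trivial. The key step is the remaining case $\langle c,c_2\rangle\neq 0$: then $c_2$ lies in $\mathrm{supp}(c)$, so $v(c_2)\leq v(c)\leq p$, and combining this with the standing hypothesis $v(c_1)\leq v(c_2)$ yields $v(c_1)\leq p$. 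Since $\mathrm{supp}(f_k(c))\subseteq\mathrm{supp}(c)\cup\{c_1\}$ and every element of that set has value $\leq p$, we conclude $v(f_k(c))\leq p$. The computation for $g_k(c)=c+\lambda\langle c,c_2\rangle c_1$ is verbatim identical, only the sign of the added term differing.

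I do not expect a genuine obstacle here: the single substantive observation is that the nonvanishing of the coefficient $\langle c,c_2\rangle$ forces $c_2$ into the support of $c$, which is exactly what lifts the inequality $v(c_1)\leq v(c_2)$ to the required bound $v(c_1)\leq v(c)\leq p$. This is precisely the point at which the order-preserving hypothesis $v(c_1)\leq v(c_2)$ is used, and it is indispensable: without it, adjoining a copy of $c_1$ could raise the value of the chain above $p$ and the restriction would fail to land in $\bar{C}^p_k$.
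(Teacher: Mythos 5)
Your proof is correct and follows essentially the same route as the paper's: both reduce to the case $l=k$, treat $\langle c,c_2\rangle=0$ trivially, and otherwise use the key observation that a nonzero coefficient puts $c_2$ in the support of $c$, so that $v(c_1)\leq v(c_2)\leq v(c)$ bounds the value of the added term. The only cosmetic difference is that the paper states the conclusion as $v(f_l(c))\leq v(c)$ while you phrase it as preservation of the sublevel set $C^p_l$ (after noting $C^p_l=\bar{C}^p_l$), which are equivalent formulations of well-definedness.
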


\begin{proof}
  For $f^p_l$, we have to prove that, for any $c \in C_l$, $v(f_l(c))\leq v(c)$.
  Let us focus on the only non-trivial case occurring for $l=k$.\\
  If $l=k$, $f_k(c)=c - \lambda \langle c, c_2 \rangle c_1$.
  If $\langle c, c_2 \rangle=0$, then $v(f_k(c))= v(c)$. Otherwise, $v(c_1) \leq v(c_2) \leq v(c)$.
  Then, $v(f_k(c))\leq v(c)$.\\
  Except for a sign, the proof for $g^p_l$ is identical.
\end{proof}

\subparagraph{Removal of local pairs.}
In accordance with the notations adopted in the paragraph of Section \ref{sec:correct} devoted to the analysis of removal of local pairs, we show here the detailed proofs of the mentioned results.

\begin{proposition}
  $\bar{C}_*$ is a chain complex.
\end{proposition}

\begin{proof}
  First of all, we have to prove that $\bar{\partial}_l$ is well-defined. In order to do that, we have to show that, for $l=k+1, k+2$, if $c\in \bar{C}_l$ then $\bar{\partial}_l(c)\in \bar{C}_{l-1}$.\\
  If $l=k+1$,
  \begin{eqnarray*}
    \langle \bar{\partial}_{k+1}(c), c_1 \rangle&=& \langle \partial_{k+1}(c), c_1 \rangle - \lambda^{-1} \langle \partial_{k+1}(c), c_1 \rangle \langle \partial_{k+1}(c_2), c_1 \rangle\\
    &=& \langle \partial_{k+1}(c), c_1 \rangle - \lambda^{-1} \lambda \langle \partial_{k+1}(c), c_1 \rangle\\
    &=& 0.
  \end{eqnarray*}
  Then, $\bar{\partial}_{k+1}(c)\in \bar{C}_{k}$.
  \\
  If $l=k+2$,
  \begin{eqnarray*}
    \langle \bar{\partial}_{k+2}(c), c_2 \rangle&=& \langle \partial_{k+2}(c), c_2 \rangle - \langle \partial_{k+2}(c), c_2 \rangle \langle c_2, c_2 \rangle\\
    &=& 0.
  \end{eqnarray*}
  Then, $\bar{\partial}_{k+2}(c)\in \bar{C}_{k+1}$.
  \\
  Second, we have to prove that $\bar{\partial}_{l} \bar{\partial}_{l+1}=0$.
  It is enough to show that for $l=k, k+1, k+2$.\\
  If $l=k$, then, thanks to the fact that $C_*$ is a chain complex,
  \begin{eqnarray*}
    \bar{\partial}_{k} \bar{\partial}_{k+1}(c)&=& \partial_{k}(\partial_{k+1}(c)) - \lambda^{-1} \langle \partial_{k+1}(c), c_1 \rangle \partial_{k}(\partial_{k+1} (c_1))\\
    &=& 0.
  \end{eqnarray*}
 If $l=k+1$, then, since $C_*$ is a chain complex,
 \begin{eqnarray*}
   \bar{\partial}_{k+1} \bar{\partial}_{k+2}(c)&=& \bar{\partial}_{k+1}(\partial_{k+2}(c) - \langle \partial_{k+2} (c), c_2 \rangle c_2)\\
   &=& \partial_{k+1}(\partial_{k+2}(c)) - \lambda^{-1} \langle \partial_{k+1}(\partial_{k+2}(c)), c_1 \rangle \partial_{k+1}(c_2) - \langle \partial_{k+2} (c), c_2 \rangle (\partial_{k+1}(c_2) - \lambda^{-1} \lambda \partial_{k+1}(c_2))\\
   &=& 0.
 \end{eqnarray*}
 If $l=k+2$, then, since $C_*$ is a chain complex,
 \begin{eqnarray*}
   \bar{\partial}_{k+2} \bar{\partial}_{k+3}(c)&=& \partial_{k+2}(\partial_{k+3}(c)) - \langle \partial_{k+2}(\partial_{k+3}(c)), c_2 \rangle c_2\\
   &=& 0.
 \end{eqnarray*}
\end{proof}

\begin{lemma}
  $r_*$ and $s_*$ are chain maps.
\end{lemma}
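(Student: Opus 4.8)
The plan is to verify the two defining chain-map identities $\bar{\partial}_l r_l = r_{l-1}\partial_l$ and $\partial_l s_l = s_{l-1}\bar{\partial}_l$ directly, exactly as in the proof of the analogous lemma for order-preserving column additions. The key simplification is that $r_l$ differs from the identity only for $l\in\{k,k+1\}$, the map $s_l$ only for $l=k+1$, and $\bar{\partial}_l$ differs from $\partial_l$ only for $l\in\{k+1,k+2\}$. Consequently both identities hold trivially for every index outside the window $k\le l\le k+2$, and it suffices to treat these finitely many cases by expanding the definitions. I would first record the two facts that drive every cancellation, namely $\partial\partial=0$ (in particular $\partial_k\partial_{k+1}=0$ and $\partial_{k+1}\partial_{k+2}=0$) and the identity $\langle\partial_{k+1}(c_2),c_1\rangle=\lambda$; along the way it is also convenient to confirm that $r$ actually lands in $\bar{C}_*$, i.e.\ $\langle r_k(c),c_1\rangle=0$ and $\langle r_{k+1}(c),c_2\rangle=0$, both of which follow from these same two facts together with $\langle c_2,c_2\rangle=1$.

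For $r_*$ I would check the three cases $l=k,k+1,k+2$. The case $l=k$ reduces to $\partial_k r_k(c)=\partial_k(c)$, since $\bar{\partial}_k=\partial_k$ and $r_{k-1}=\id$; the correction term $\lambda^{-1}\langle c,c_1\rangle\,\partial_k\partial_{k+1}(c_2)$ vanishes by $\partial\partial=0$. The case $l=k+2$ is immediate because $r_{k+1}(\partial_{k+2}(c))=\partial_{k+2}(c)-\langle\partial_{k+2}(c),c_2\rangle c_2$ is exactly $\bar{\partial}_{k+2}(c)$, while $r_{k+2}=\id$. The genuinely computational case is $l=k+1$: expanding $\bar{\partial}_{k+1}\big(r_{k+1}(c)\big)=\bar{\partial}_{k+1}\big(c-\langle c,c_2\rangle c_2\big)$ produces a spurious term $-\langle c,c_2\rangle\partial_{k+1}(c_2)$ coming from $\partial_{k+1}$ and a compensating term $+\lambda^{-1}\langle c,c_2\rangle\langle\partial_{k+1}(c_2),c_1\rangle\partial_{k+1}(c_2)=+\langle c,c_2\rangle\partial_{k+1}(c_2)$ coming from the correction; these cancel, leaving $\partial_{k+1}(c)-\lambda^{-1}\langle\partial_{k+1}(c),c_1\rangle\partial_{k+1}(c_2)$, which is precisely $r_k(\partial_{k+1}(c))$.

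For $s_*$ the two nontrivial cases are $l=k+1$ and $l=k+2$. The case $l=k+1$ is a one-line expansion: $\partial_{k+1}s_{k+1}(c)=\partial_{k+1}(c)-\lambda^{-1}\langle\partial_{k+1}(c),c_1\rangle\partial_{k+1}(c_2)=\bar{\partial}_{k+1}(c)=s_k\bar{\partial}_{k+1}(c)$. The delicate case---and the step I expect to be the main obstacle---is $l=k+2$, where one must show $s_{k+1}\bar{\partial}_{k+2}(c)=\partial_{k+2}(c)$. Here I would first use $\partial_{k+1}\partial_{k+2}=0$ to compute $\partial_{k+1}\bar{\partial}_{k+2}(c)=-\langle\partial_{k+2}(c),c_2\rangle\partial_{k+1}(c_2)$, so that pairing against $c_1$ gives $\langle\partial_{k+1}\bar{\partial}_{k+2}(c),c_1\rangle=-\lambda\langle\partial_{k+2}(c),c_2\rangle$. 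Substituting this into the definition of $s_{k+1}$ shows that its correction term $-\lambda^{-1}\langle\partial_{k+1}\bar{\partial}_{k+2}(c),c_1\rangle c_2=+\langle\partial_{k+2}(c),c_2\rangle c_2$ restores exactly the term that $\bar{\partial}_{k+2}$ had subtracted, so $s_{k+1}\bar{\partial}_{k+2}(c)=\partial_{k+2}(c)$, as required. The only real care needed is tracking the interplay between the coefficient $\lambda$ and the relation $\partial\partial=0$ in this last identity; all remaining indices give trivial equalities.
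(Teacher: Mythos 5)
Your proposal is correct and follows essentially the same route as the paper: a case-by-case verification of $\bar{\partial}_l r_l = r_{l-1}\partial_l$ and $s_{l-1}\bar{\partial}_l = \partial_l s_l$ in the only nontrivial degrees $l = k, k+1, k+2$, with all cancellations driven by $\partial\partial = 0$ and $\langle \partial_{k+1}(c_2), c_1\rangle = \lambda$. Your additional check that $r_*$ actually lands in $\bar{C}_*$ is a welcome (and correct) bit of care that the paper handles only implicitly, but it does not change the argument.
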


\begin{proof}
  For proving that $r_*$ is a chain map is enough to show that $\bar{\partial}_l r_l=r_{l-1} \partial_l$ for $l=k, k+1, k+2$.\\
  If $l=k$, then
  \begin{eqnarray*}
    \bar{\partial}_k r_k(c) &=& \partial_k(c) - \lambda^{-1} \langle c, c_1 \rangle \partial_k(\partial_{k+1}(c_2))\\
    &=& \partial_k(c) = r_{k-1}(\partial_k(c)).
  \end{eqnarray*}
  \\
  If $l=k+1$, then
  \begin{eqnarray*}
    \bar{\partial}_{k+1} r_{k+1}(c) &=& \bar{\partial}_{k+1}(c - \langle c, c_2 \rangle c_2)\\
    &=& \partial_{k+1}(c) - \lambda^{-1} \langle \partial_{k+1}(c), c_1 \rangle \partial_{k+1}(c_2) - \langle c, c_2 \rangle (\partial_{k+1}(c_2) -\lambda^{-1} \lambda \partial_{k+1}(c_2)  )\\
    &=& \partial_{k+1}(c) - \lambda^{-1} \langle \partial_{k+1}(c), c_1 \rangle \partial_{k+1}(c_2)\\
    &=& r_{k}(\partial_{k+1}(c)).
  \end{eqnarray*}
  If $l=k+2$, then
  \begin{eqnarray*}
    \bar{\partial}_{k+2} r_{k+2} (c) &=& \partial_{k+2}(c) - \langle \partial_{k+2}(c), c_2 \rangle c_2\\
    &=& r_{k+1} (\partial_{k+2}(c)).
  \end{eqnarray*}
  For proving that $s_*$ is a chain map is enough to show that $s_{l-1} \bar{\partial}_l = \partial_l s_l$ for $l=, k+1,k+2$.\\
  If $l=k+1$, then
  \begin{eqnarray*}
    s_{k} \bar{\partial}_{k+1}(c)&=& \partial_{k+1}(c) - \lambda^{-1} \langle \partial_{k+1}(c), c_1 \rangle \partial_{k+1}(c_2)\\
    &=& \partial_{k+1} (s_{k+1}(c)).
  \end{eqnarray*}
  If $l=k+2$, then
  \begin{eqnarray*}
    s_{k+1} \bar{\partial}_{k+2}(c)&=& s_{k+1} ( \partial_{k+2}(c) -  \langle \partial_{k+2} (c), c_2 \rangle c_2 )\\
    &=& \partial_{k+2}(c) - \lambda^{-1} \langle \partial_{k+1}(\partial_{k+2}(c)), c_1 \rangle c_2 - \langle \partial_{k+2}(c), c_2 \rangle (c_2 - \lambda^{-1} \lambda c_2)\\
    &=& \partial_{k+2}(c) = \partial_{k+2}( s_{k+2} (c) ).
  \end{eqnarray*}
\end{proof}

In order to prove the following result, let us define a collection of maps $\phi_l: C_l \rightarrow C_{l+1}$, for any $l\in \N$.
Given $c \in C_l$, the map $\phi_l$ is defined as:
\[\phi_l(c)=\begin{cases}
\lambda^{-1} \langle c, c_1 \rangle c_2 & \text{ if } l=k,\\
0 & \text{ otherwise.}
\end{cases}\]

\begin{proposition}
 The maps $r_*$ and $s_*$ are chain maps which are homotopy-inverse one with respect to the other.
\end{proposition}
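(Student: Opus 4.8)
The preceding lemma already establishes that $r_*$ and $s_*$ are chain maps, so the only thing left to verify is that they are homotopy-inverse to each other, namely that $r_* s_*$ is chain-homotopic to $\id_{\bar{C}_*}$ and that $s_* r_*$ is chain-homotopic to $\id_{C_*}$. The plan is to treat the two composites separately: one of them turns out to be the identity on the nose, while the other genuinely needs the homotopy $\phi_*$ defined just above.

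First I would dispatch the composite $r_* s_*$ by a direct level-by-level computation. Since $r_l$ and $s_l$ differ from the identity only for $l=k$ and $l=k+1$, the only cases to check are these two. Using that every $c\in\bar{C}_k$ satisfies $\langle c,c_1\rangle=0$ and every $c\in\bar{C}_{k+1}$ satisfies $\langle c,c_2\rangle=0$, one verifies $r_k s_k(c)=c$ and $r_{k+1}s_{k+1}(c)=c$; in the latter case, expanding $r_{k+1}(s_{k+1}(c))$ shows that the coefficient $\langle s_{k+1}(c),c_2\rangle$ exactly cancels the $c_2$-term introduced by $s_{k+1}$. Hence $r_* s_*=\id_{\bar{C}_*}$ exactly, so this composite is trivially chain-homotopic to the identity.

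The substantive direction is $s_* r_*\simeq\id_{C_*}$, for which I would use the prescribed homotopy $\phi_*$ and verify the chain-homotopy identity
\[\partial_{l+1}\phi_l + \phi_{l-1}\partial_l = \id_{C_l} - s_l r_l\]
for every $l$. Because $\phi_l=0$ for $l\neq k$ and a short computation gives $s_l r_l=\id$ for $l\notin\{k,k+1\}$, the identity holds automatically outside $\{k,k+1\}$ and reduces to two explicit checks. At $l=k$ I would compute $s_k r_k(c)=c-\lambda^{-1}\langle c,c_1\rangle\,\partial_{k+1}(c_2)$ (here $s_k=\id$) and compare with $\partial_{k+1}\phi_k(c)=\lambda^{-1}\langle c,c_1\rangle\,\partial_{k+1}(c_2)$ together with $\phi_{k-1}\partial_k=0$, which match. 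At $l=k+1$ I would compute $s_{k+1}r_{k+1}(c)=c-\lambda^{-1}\langle\partial_{k+1}(c),c_1\rangle\,c_2$ and compare with $\phi_k\partial_{k+1}(c)=\lambda^{-1}\langle\partial_{k+1}(c),c_1\rangle\,c_2$ together with $\partial_{k+2}\phi_{k+1}=0$, which again match.

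The main obstacle is the level-$(k+1)$ computation of $s_{k+1}r_{k+1}$ in this hard direction: here $c$ ranges over all of $C_{k+1}$, so $\langle c,c_2\rangle$ need not vanish, and one must carry through a two-term cancellation in which the summand $\langle c,c_2\rangle c_2$ subtracted by $r_{k+1}$ is exactly cancelled by the contribution of $\langle\partial_{k+1}(c_2),c_1\rangle=\lambda$ appearing inside the coefficient of $s_{k+1}$. An analogous cancellation, this time using $\langle c,c_2\rangle=0$ on $\bar{C}_{k+1}$, occurs in the easy direction $r_{k+1}s_{k+1}$. Keeping careful track of these cancellations is the only real subtlety; once they are verified, the homotopy identity holds at every level, establishing $s_* r_*\simeq\id_{C_*}$ and completing the proof that $r_*$ and $s_*$ are homotopy-inverse.
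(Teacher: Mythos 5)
Your proposal is correct and follows essentially the same route as the paper's own proof: the paper likewise shows $r_l s_l = \id_{\bar{C}_l}$ exactly (using $\langle c,c_1\rangle=0$ on $\bar{C}_k$ and $\langle c,c_2\rangle=0$ on $\bar{C}_{k+1}$) and verifies the chain-homotopy identity $\partial_{l+1}\phi_l+\phi_{l-1}\partial_l+s_lr_l=\id_{C_l}$ with the same homotopy $\phi_*$, the same two nontrivial levels $l=k,k+1$, and the same cancellations (in particular $s_{k+1}r_{k+1}(c)=c-\lambda^{-1}\langle\partial_{k+1}(c),c_1\rangle c_2$).
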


\begin{proof}
  In order to prove the thesis, we show that, for any $l$:
  \begin{itemize}
    \item[(1)] $s_l r_l$ is chain-homotopic to $\id_{C_l}$ via $\phi_l$,
    \item[(2)] $r_l s_l = \id_{\bar{C}_l}$.
  \end{itemize}
  Let us start with proving $(1)$. We have to show that
  \[\partial_{l+1}\phi_{l} + \phi_{l-1}\partial_{l}+s_{l}r_{l}=\id_{C_l}.\]
  The only non-trivial cases occur for $l=k, k+1$.\\
  If $l=k$, we have that
  \begin{eqnarray*}
    \partial_{k+1}\phi_{k}(c) + \phi_{k-1}\partial_{k}(c) +s_{k}r_{k}(c)&=& \lambda^{-1} \langle c, c_1 \rangle \partial_{k+1}(c_2) + 0 + c - \lambda^{-1} \langle c, c_1 \rangle \partial_{k+1}(c_2)\\
    &=& c.
  \end{eqnarray*}
  If $l=k+1$, we have that
  \begin{eqnarray*}
    \partial_{k+2}\phi_{k+1}(c) + \phi_{k}\partial_{k+1}(c) + s_{k+1}r_{k+1}(c)&=& 0 +  \lambda^{-1} \langle \partial_{k+1}(c), c_1 \rangle c_2 + s_{k+1}( c - \langle c, c_2 \rangle c_2 )\\
    &=& \lambda^{-1} \langle \partial_{k+1}(c), c_1 \rangle c_2 + c - \lambda^{-1} \langle \partial_{k+1}(c), c_1 \rangle c_2 - \langle c, c_2 \rangle (c_2 - \lambda^{-1}\lambda c_2)\\
    &=& c.
  \end{eqnarray*}
  Now, let us prove $(2)$. The only non-trivial cases occur for $l=k, k+1$.\\
  If $l=k$, since $c\in \bar{C}_k$ implies $\langle c, c_1 \rangle = 0$, we have that
  \begin{eqnarray*}
    r_k s_k(c)&=&r_k(c)= c - \lambda^{-1} \langle c, c_1 \rangle \partial_{k+1}(c_2)\\
    &=& c.
  \end{eqnarray*}
  If $l=k+1$, since $c\in \bar{C}_{k+1}$ implies $\langle c, c_2 \rangle = 0$, we have that
  \begin{eqnarray*}
    r_{k+1} s_{k+1}(c)&=&r_{k+1}( c - \lambda^{-1} \langle \partial_{k+1}(c), c_1 \rangle c_2 )\\
    &=& c - \langle c, c_2 \rangle c_2 - \lambda^{-1} \langle \partial_{k+1}(c), c_1 \rangle (c_2 - c_2)\\
    &=& c.
  \end{eqnarray*}
\end{proof}

\begin{lemma}
  The function $v$ is a valid value function for $\bar{C}_*$ inducing a bifiltered chain complex $\bar{C}$.
\end{lemma}

\begin{proof}
  We have to show, that, for any $c \in \bar{C}_l$, $v(\bar{\partial}_l(c))\leq v(c)$.
  For $l\neq k+1, k+2$, that is trivial. Let us prove the other cases, recalling that:
  \begin{itemize}
    \item $v(c_1)= v(c_2)$,
    \item for any $c \in C_l$, $v(\partial_l(c))\leq v(c)$.
  \end{itemize}
  If $l=k+1$, $\bar{\partial}_{k+1}(c)=\partial_{k+1}(c) - \lambda^{-1} \langle \partial_{k+1}(c), c_1 \rangle \partial_{k+1}(c_2)$.
  If $\langle \partial_{k+1}(c), c_1 \rangle=0$, then $v(\bar{\partial}_{k+1}(c))= v(\partial_{k+1}(c)) \leq v(c)$. Otherwise, \item $v(\partial_{k+1}(c_2))\leq v(c_2) = v(c_1) \leq v(\partial_{k+1}(c)) \leq v(c)$.
  Then, $v(\bar{\partial}_{k+1}(c))\leq v(c)$.\\
  If $l=k+2$, $\bar{\partial}_{k+2}(c)=\partial_{k+2}(c) - \langle \partial_{k+2} (c), c_2 \rangle c_2$.
  If $\langle \partial_{k+2} (c), c_2 \rangle=0$, then $v(\bar{\partial}_{k+2}(c))= v(\partial_{k+2}(c)) \leq v(c)$.
  Otherwise, $v(c_2) \leq v(\partial_{k+2} (c)) \leq v(c)$.
  Then, $v(\bar{\partial}_{k+2}(c))\leq v(c)$.
\end{proof}

\begin{lemma}
  For any $p\in \R^2$, the restrictions $r^p_l:C^p_l \rightarrow \bar{C}^p_l$, $s^p_l:\bar{C}^p_l \rightarrow C^p_l$, and $\phi^p_l:C^p_l \rightarrow C^p_{l+1}$ of the maps $r_l$, $s_l$, and $\phi_l$, respectively, are well-defined.
\end{lemma}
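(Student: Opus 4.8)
The plan is to verify, for each of the three maps and each relevant dimension $l$, that the map does not increase value, i.e.\ that $v(r_l(c))\leq v(c)$ (and likewise for $s_l$ and $\phi_l$). This is exactly the condition needed: since $r_*$ and $s_*$ have already been shown to be chain maps into $\bar{C}_*$ (and $\phi_l$ lands in $C_{l+1}$ by construction), the structural membership in the codomain is automatic, so the only thing left to check for well-definedness of the restriction is that a chain of value $\leq p$ is sent to a chain of value $\leq p$. Because all three maps act as the identity or as zero outside dimensions $k$ and $k+1$, only a handful of cases require attention, and each follows the same pattern as the analogous lemma already established for order-preserving column additions.

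First I would treat $r^p_l$. The cases $l\neq k,k+1$ are immediate since there $r_l=\id$. For $l=k$ we have $r_k(c)=c-\lambda^{-1}\langle c,c_1\rangle\partial_{k+1}(c_2)$; the correction term vanishes unless $\langle c,c_1\rangle\neq 0$, in which case $c_1$ lies in the support of $c$ and hence $v(c_1)\leq v(c)$. Combining the value-function property $v(\partial_{k+1}(c_2))\leq v(c_2)$ with the local-pair equality $v(c_2)=v(c_1)$ shows the correction term has value at most $v(c)$, so $v(r_k(c))\leq v(c)$. The case $l=k+1$ is analogous: for $r_{k+1}(c)=c-\langle c,c_2\rangle c_2$, a nonzero $\langle c,c_2\rangle$ forces $c_2$ into the support of $c$, giving $v(c_2)\leq v(c)$.

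Next I would handle the only nontrivial case of $s$, namely $s_{k+1}(c)=c-\lambda^{-1}\langle\partial_{k+1}(c),c_1\rangle c_2$. A nonzero correction requires $c_1$ to appear in $\partial_{k+1}(c)$, whence $v(c_1)\leq v(\partial_{k+1}(c))\leq v(c)$; using $v(c_2)=v(c_1)$ this yields $v(s_{k+1}(c))\leq v(c)$. Finally, for $\phi_k(c)=\lambda^{-1}\langle c,c_1\rangle c_2$, a nonzero value of $\langle c,c_1\rangle$ again places $c_1$ in the support of $c$, so $v(c_2)=v(c_1)\leq v(c)$ and the claim follows.

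I expect no genuine obstacle here; the reasoning is entirely parallel to the order-preserving column addition case. The only point worth emphasizing is that the argument uses the local-pair hypothesis $v(c_1)=v(c_2)$ in an essential way: it is this \emph{equality} (rather than a one-sided inequality) that lets us bound the value of whichever of $c_1,c_2$ is introduced by the correction term, regardless of which of the two is the one that appears in the support of the relevant chain.
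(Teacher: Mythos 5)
Your proof is correct and follows essentially the same route as the paper's: in both, well-definedness of the restrictions reduces to checking $v(r_l(c))\leq v(c)$, $v(s_l(c))\leq v(c)$, and $v(\phi_l(c))\leq v(c)$ in the nontrivial dimensions, using that a nonzero coefficient places the relevant generator in the support (bounding its value), the property $v(\partial_l(c))\leq v(c)$, and the local-pair equality $v(c_1)=v(c_2)$. The case analysis and the chains of inequalities you give match the paper's almost verbatim.
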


\begin{proof}
  For $r^p_l$, we have to prove that, for any $c \in C_l$, $v(r_l(c))\leq v(c)$.
  Let us focus on the only non-trivial cases occurring for $l=k,k+1$.\\
  If $l=k$, $r_k(c)= c - \lambda^{-1} \langle c, c_1 \rangle \partial_{k+1}(c_2)$.
  If $\langle c, c_1 \rangle=0$, then $v(r_k(c))= v(c)$. Otherwise, $v(\partial_{k+1}(c_2)) \leq v(c_2) = v(c_1) \leq v(c)$.
  Then, $v(r_k(c))\leq v(c)$.\\
  If $l=k+1$, $r_{k+1}(c)=c - \langle c, c_2 \rangle c_2$.
  If $\langle c, c_2 \rangle=0$, then $v(r_{k+1}(c))= v(c)$. Otherwise, $v(c_2) \leq v(c)$.
  Then, $v(r_{k+1}(c))\leq v(c)$.\\
  For $s^p_l$, we have to prove that, for any $c \in \bar{C}_l$, $v(s_l(c))\leq v(c)$.
  Let us focus on the only non-trivial case occurring for $l=k+1$.\\
  If $l=k+1$,
  $s_{k+1}(c)=c - \lambda^{-1} \langle \partial_{k+1}(c), c_1 \rangle c_2$.
  If $\langle \partial_{k+1}(c), c_1 \rangle=0$, then $v(s_{k+1}(c))= v(c)$. Otherwise, $v(c_2) = v(c_1) \leq v(\partial_{k+1}(c)) \leq v(c)$.
  Then, $v(s_{k+1}(c))\leq v(c)$.\\
  For $\phi^p_l$, we have to prove that, for any $c \in C_l$, $v(\phi_l(c))\leq v(c)$.
  Let us focus on the only non-trivial case occurring for $l=k$ and for $c \in C_k$ such that $\langle c, c_1 \rangle \neq 0$.\\
  If this is the case, then $v(c_2)=v(c_1)\leq v(c)$. So,
  $v(\phi_k(c))=v(\lambda^{-1}\langle c, c_1 \rangle c_2)=v(c_2)\leq v(c)$.
\end{proof}

\section{Proofs from Section~\ref{sec:optimal}}
\label{sec:app_details_2}
In accordance with the notations adopted in Section \ref{sec:optimal}, we show here the detailed proofs of the mentioned results.

Let $A_*, B_*$ be two chain complexes such that $A_*$ is a subcomplex of $B_*$ and let $\eta_k: H_k(A_*)\rightarrow H_k(B_*)$ be the map in homology induced by this inclusion.
Let us consider the boundary matrix of $B_*$ in which the generators of $A_*$ are placed on the left with respect to the generators of $B_*$ not belonging to $A_*$. By applying the standard reduction algorithm for computing persistent homology \cite{Edelsbrunner2010}, we obtain a reduced matrix $R$.
To avoid a clash in terminology with previous parts of the paper,
we refer to the entry of lowest position in the matrix among the non-null ones in column $c'$ as the \emph{lowest index} of $c'$.
Let us denote the columns of $R$ corresponding to the generators of $B_*$ not belonging to $A_*$ as the columns of $B_*\setminus A_*$ and the others as the columns of $A_*$.

\begin{lemma}\label{lemma:matrixReduction}
  We have that:
  \begin{itemize}
    \item the number of the $k$-columns of $B_*\setminus A_*$ having as lowest index a column of $A_*$ is equal to $\dim\,\ker \, \eta^p_{k-1}$,
    \item the number of the $k$-columns of $B_*\setminus A_*$ with null boundary for which there is no column in $R$ having them as lowest index is equal to $\dim\,\coker \, \eta^p_k$.
  \end{itemize}
\end{lemma}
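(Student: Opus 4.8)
The plan is to exploit two structural features of the reduction of $B_*$ once the generators of $A_*$ are placed first. Since $A_*$ is a subcomplex and every generator of $A_*$ carries a smaller index than any generator of $B_*\setminus A_*$, the columns of $A_*$ have all their entries on rows indexed by $A_*$, so the left block of $R$ is exactly the reduction of $A_*$ in isolation; and because adding a column of $A_*$ never alters a row indexed by a generator of $B_*\setminus A_*$, the block of $R$ indexed by $B_*\setminus A_*$ in both rows and columns is precisely the reduction of the quotient complex $B_*/A_*$. In particular, a $k$-column $c'$ of $B_*\setminus A_*$ has its lowest index in $A_*$ if and only if, after reduction, it is supported entirely on $A_*$ (the maximal-index nonzero entry lies in $A_*$, hence so do all of them), equivalently $c'$ is a cycle of $B_*/A_*$ that is not a cycle of $B_*$. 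I will use throughout the standard facts about $R=\partial V$: the $k$-columns of $V$ form a basis of $C_k(B_*)$ and are upper triangular with unit diagonal, a column reducing to zero yields a cycle in $V$, a nonzero reduced column equals the boundary of its $V$-column, and all lowest indices of $R$ are distinct.

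For the first item I would prove that the classes of the reduced columns of $B_*\setminus A_*$ with lowest index in $A_*$ form a basis of $\ker\,\eta_{k-1}$. If $c'$ is such a $k$-column, then $R[\cdot,c']$ is supported on $A_*$ and, being a boundary, is a $(k-1)$-cycle; thus it lies in $Z_{k-1}(A_*)$ and, as it is a boundary in $B_*$, its class lies in $\ker\,\eta_{k-1}$. These reduced columns together with the nonzero reduced $k$-columns of $A_*$ (which span $B_{k-1}(A_*)$) have pairwise distinct lowest indices, hence are linearly independent as vectors; therefore the former are independent modulo $B_{k-1}(A_*)$, giving one inequality. For the reverse, I would take any class of $\ker\,\eta_{k-1}$, represented by $\zeta=\partial u$ with $\zeta\in Z_{k-1}(A_*)$ and $u\in C_k(B_*)$, expand $u$ in the basis given by the $k$-columns of $V$, and apply $\partial$ to obtain $\zeta=\sum_j a_j R[\cdot,j]$. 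A lowest-index argument then forces $a_j=0$ for every $j$ reducing to zero and for every $j$ whose lowest index lies in $B_*\setminus A_*$: among the latter, the summand of maximal lowest index would contribute an entry on a $B_*\setminus A_*$ row that no other summand can reach, contradicting that $\zeta$ is supported on $A_*$. What remains expresses $\zeta$, modulo $B_{k-1}(A_*)$, as a combination of exactly the reduced columns above, so these classes also span $\ker\,\eta_{k-1}$, establishing the first equality.

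For the second item I would argue by a dimension count rather than by exhibiting a basis. By the identification of the $B_*\setminus A_*$ block of $R$ with the reduction of $B_*/A_*$, the number of $k$-cycles of $B_*/A_*$ equals the number of $k$-columns of $B_*\setminus A_*$ that are either null or have lowest index in $A_*$, while the number of $k$-boundaries equals the number of $(k+1)$-columns of $B_*\setminus A_*$ with lowest index in $B_*\setminus A_*$. Subtracting, and writing $n$ for the count in the first item and $m$ for the count in the second (the null $k$-columns of $B_*\setminus A_*$ that are never a lowest index), one gets $\dim\,H_k(B_*/A_*)=m+n$. Working over a field, $H_k(B_*/A_*)\cong H_k(B_*,A_*)$, and exactness of the long exact sequence of the pair $(B_*,A_*)$ at $H_k(B_*,A_*)$ yields $\dim\,H_k(B_*,A_*)=\dim\,\coker\,\eta_k+\dim\,\ker\,\eta_{k-1}$. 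Combining these with the first item $\dim\,\ker\,\eta_{k-1}=n$ gives $\dim\,\coker\,\eta_k=m$, which is the second item.

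The main obstacle is the structural claim in the first paragraph, namely that the $B_*\setminus A_*$ block of $R$ genuinely realizes the reduction of $B_*/A_*$ and that ``lowest index in $A_*$'' is equivalent to ``cycle in the quotient but not in $B_*$'', together with the spanning half of the first item, where the expansion $\zeta=\sum_j a_j R[\cdot,j]$ must be pruned correctly using distinctness of lowest indices. Once these are in place, the remaining bookkeeping and the appeal to the long exact sequence are routine.
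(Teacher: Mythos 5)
Your proposal is correct, and it is genuinely more self-contained than the paper's own proof, which for both items is essentially an appeal to known facts: the paper partitions the $k$-columns of $B_*\setminus A_*$ into three classes and then invokes the standard one-parameter reduction theory (the persistence pairing for the two-step filtration $A_*\subseteq B_*$) to assert that columns with lowest index in $A_*$ correspond to classes of $H_{k-1}(A_*)$ dying in $H_{k-1}(B_*)$, and that unpaired null columns correspond to classes born in $H_k(B_*)$. You instead prove these correspondences from scratch. For the kernel item you extract an explicit basis of $\ker\,\eta^p_{k-1}$ from the reduced columns themselves, using distinctness of lowest indices twice (once for independence modulo $B_{k-1}(A_*)$, once to prune the expansion $\zeta=\sum_j a_j R[\cdot,j]$); for the cokernel item you avoid representatives entirely, identifying the $(B_*\setminus A_*)$-block of $R$ with a reduced matrix of the quotient complex $B_*/A_*$, counting $\dim H_k(B_*/A_*)=m+n$, and closing with the long exact sequence of the pair. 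What your route buys is a complete, checkable argument resting only on $R=\partial V$ and elementary homological algebra, plus the block/quotient observation, which is of independent interest (it is also implicitly what the paper's Theorem~\ref{thm:optimality2} needs when it completes the partial reduction); what the paper's route buys is brevity by citation. Two small repairs: the claim that the expansion ``forces $a_j=0$ for every $j$ reducing to zero'' is not true --- nothing constrains those coefficients --- but it is also unnecessary, since such columns contribute nothing to the sum; and in the block identification you should say that the block is \emph{a} reduced matrix of $\partial^{B_*/A_*}$ obtained by left-to-right column additions (so its pivots and zero-column count are canonical by uniqueness of pivots), rather than the literal output of the algorithm run on the quotient, which is all your dimension count requires.
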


\begin{proof}
We can partition the $k$-columns of $B_*\setminus A_*$ in three distinct classes.
The first class includes the $k$-columns of $B_*\setminus A_*$ having as lowest index a column of $A_*$. According to the standard reduction algorithm, these columns are in one-to-one correspondence with the non-null homology classes of $H_{k-1}(A_*)$ that become trivial in $H_{k-1}(B_*)$. So, their number coincides with the dimension of $\ker \, \eta^p_{k-1}$.
The second class includes the $k$-columns of $B_*\setminus A_*$ with null boundary for which there is no column in $R$ having them as lowest index.
According to the standard reduction algorithm, these columns are in one-to-one correspondence with the homology classes that are born in $H_{k}(B_*)$. So, their number coincides with the dimension of $\coker \, \eta^p_{k}$.
The third class of $k$-columns of $B_*\setminus A_*$ are the ones with lowest index in $B_*\setminus A_*$ or the columns with null boundary which are the lowest index of a column of $B_*\setminus A_*$.
\end{proof}

\begin{theorem}\label{thm:optimality2}
  Let $\bar{C}$ be the bifiltered chain complex obtained by applying the proposed reduction algorithm to the bifiltered chain complex $C$. The number $\gamma^p_k(\bar{C})$ of $k$-generators added at value $p$ in $\bar{C}$ is equal to $\delta^p_k(C)$.
\end{theorem}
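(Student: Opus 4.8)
The plan is to connect the global/local labelling produced by the chunk algorithm to the three-class partition of Lemma~\ref{lemma:matrixReduction}, applied to the inclusion of $A_* = C^{<p}_*$ into $B_* = C^p_*$. The columns of $B_*\setminus A_*$ are exactly the generators of value $p$, so in dimension $k$ there are $\gamma^p_k(C)$ of them, and the lemma already identifies the first class with $\dim\,\ker\,\eta^p_{k-1}$ and the second with $\dim\,\coker\,\eta^p_k$; hence $\delta^p_k(C)$ equals the number of class-1 and class-2 $k$-columns. Since $\bar{C}$ is obtained by discarding every local column and keeping every global column with its original value, $\gamma^p_k(\bar{C})$ equals the number of global $k$-columns of value $p$. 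Thus it suffices to match the chunk labels to the lemma's classes: I will show that the local negative $k$-columns are exactly the class-3 columns with lowest index in $B_*\setminus A_*$, that the local positive $k$-columns are exactly the class-3 columns of null boundary serving as a pivot, and therefore that the global $k$-columns are precisely those in classes 1 and 2.

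First I would exploit that the index function is consistent with $v$, so every generator of value $q<p$ has strictly smaller index than every generator of value $p$. Consequently, for a value-$p$ column the lowest index in the full matrix lies in a value-$p$ row precisely when the value-$p$ part of its boundary is nonzero, and in that case the lowest index coincides with the local pivot. A second elementary observation is that adding a column of $A_*$ to a value-$p$ column never alters the value-$p$ part of its boundary, since a value-$<p$ column has no value-$p$ boundary entry. Therefore the value-$p$ pivots produced by the standard reduction of $B_*$ are determined solely by column additions among value-$p$ columns, which is exactly what Phase I performs. Using the order-independence of the set of pivot positions, I would then conclude that the local negative $k$-columns coincide with the value-$p$-pivot columns, i.e. the columns of class 3a, and that the local positive generators are the corresponding value-$p$ pivot rows.

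To place the local positive $k$-columns in class 3b I would invoke the clearing principle underlying the twist optimisation. A local positive $k$-generator $g$ is the local pivot of a local negative $(k+1)$-column $c$ of value $p$; by the previous step $g$ is the lowest index of $c$ in the reduced $\partial_{k+1}$, so $g$ is a pivot row there. Since $\partial_k\partial_{k+1}=0$, a generator that is a pivot row of $\partial_{k+1}$ is a ``positive'' generator whose own column in $\partial_k$ reduces to zero; thus $g$ has null boundary in the reduced matrix and is the pivot of a value-$p$ $(k+1)$-column, placing it in class 3b. Reading the same correspondence backwards shows every class-3b $k$-column is local positive, and that a value-$p$ row can only be the pivot of a value-$p$ column.

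Combining the three matchings gives, among value-$p$ $k$-columns, that the local columns are exactly classes 3a and 3b, so the global ones are exactly classes 1 and 2; hence $\gamma^p_k(\bar{C}) = \dim\,\ker\,\eta^p_{k-1} + \dim\,\coker\,\eta^p_k = \delta^p_k(C)$. The hard part will be the second step: one must argue rigorously that Phase I reproduces the value-$p$ pivots of the global reduction of $B_*$, which relies both on the stability of the value-$p$ pivots (they cannot be cancelled by $A_*$ columns nor by unreduced value-$p$ columns, since their pivot rows are distinct) and on the order-independence of the pivot positions in matrix reduction; the clearing argument of the third step is then a standard consequence of $\partial_k\partial_{k+1}=0$.
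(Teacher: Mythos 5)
Your proposal is correct and takes essentially the same route as the paper's own proof in Appendix~\ref{sec:app_details_2}: both apply Lemma~\ref{lemma:matrixReduction} with $A_*=C^{<p}_*$ and $B_*=C^p_*$, interpret Phase I as a partial left-to-right reduction whose value-$p$ pivots persist by uniqueness of the pairing (using that columns of $C^{<p}_*$ carry no value-$p$ entries), and conclude that the global columns are exactly the columns of classes 1 and 2 while the local columns fill class 3. If anything, you are more explicit than the paper about the one delicate point --- that the untouched local positive columns must be shown to reduce to zero via the clearing principle, which requires organizing your second and third steps as a single descending induction on dimension.
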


\begin{proof}
Let us apply Lemma \ref{lemma:matrixReduction} to the case $A_*=C^{<p}_*$ and $B_*=C^p_*$
(considered as unfiltered chain complexes). Note that the local reduction phase
at value $p$ in the chunk algorithm corresponds to a partial reduction
of the matrix of $C^p_*$, because all column additions are left-to-right,
and it is known that the lowest indices of the reduced matrix are invariant
under left-to-right column additions. At the end of the local reduction phase,
each column is labeled as local (positive or negative), or global.
A local column is paired with another local column of value $p$,
and this pairing will not change when completing the partial reduction
to a completely reduced matrix for $C^p_*$.
The reason is simply that all columns
in $C^{<p}_*$ have a lowest index with smaller index
than any column of value $p$, since the boundary matrix is upper triangular.
By the above reasoning, the local columns will neither contribute to
kernel nor cokernel of $\eta^p_{k-1}$.

On the other hand, consider a global column with value $p$. Its lowest index
corresponds to a column in $C^{<p}_*$ after the local reduction phase.
Since the lowest index can only decrease when further reducing the matrix,
it follows that in the reduced matrix, the global column has
still a lowest index in $C^{<p}_*$, in which case it contributes to $\ker\,\eta^p_{k-1}$, or its boundary becomes null. In the latter case, it cannot be paired with
another column at value $p$ since then, this pairing would have been
identified already at the local reduction phase. Hence, the global column
contributes to $\coker \, \eta^p_{k}$.
After all, each global column contributes to the kernel or the cokernel,
and therefore, the number of global columns is equal to $\delta^p_k(C)$.
Moreover, the number of global columns at value $p$ is exactly the number
of generators added at value $p$ to the output complex $\bar{C}$.
\end{proof}

Given an arbitrary bifiltered chain complex $D$ and $p=(p_x, p_y)\in \R^2$, let us consider the chain complex $D^{<p}_*$ defined as $\sum_{q<p} D^q_*$.
Since we are considering just finitely generated chain complexes, there exists a sufficiently small $\epsilon$ for which, by defining $p_1=(p_x-\epsilon, p_y)$ and $p_2=(p_x, p_y-\epsilon)$, we have that, for any $p_1\leq q \leq p$, $D^q_*=D^{p_1}_*$, and, for any $p_2\leq q \leq p$, $D^q_*=D^{p_2}_*$. Then, $D^{<p}_*$ can be expressed as the finite sum $D^{p_1}_* + D^{p_2}_*$.
Moreover, defining $p_0$ as the value $(p_x-\epsilon, p_y-\epsilon)$, the 1-criticality of the bifiltration $D$ ensures that $D^{p_0}_*=D^{p_1}_* \cap D^{p_2}_*$.
For any arbitrary bifiltered chain complex $D$, let $\eta^p_k$ denote the homology map in dimension $k$ induced by the inclusion of $D^{<p}_*$ into $D^p_*$.
Since each $k$-generator newly introduced in $D^p_*$ can act on the homology by just increasing the dimension of $\ker \, \eta^p_{k-1}$, or alternatively of $\coker \, \eta^p_k$, at most by 1, we have that
\begin{equation}\label{eq:ker-coker}
\delta^p_k(D)=\dim\,\ker \, \eta^p_{k-1} + \dim\,\coker \, \eta^p_k \leq \dim\,D^p_k - \dim\,D^{<p}_k=\gamma^p_k(D).
\end{equation}

\begin{theorem}\label{lemma:leq2}
Let $D$ be a bifiltered chain complex quasi-isomorphic to $C$ (through the chain maps $f^p_*: C^p_* \rightarrow D^p_*$). Then, $\delta^p_k(C) \leq \gamma^p_k(D)$.
\end{theorem}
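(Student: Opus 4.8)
The plan is to reduce the desired inequality to the bound \eqref{eq:ker-coker} established just above, which gives $\delta^p_k(D) \leq \gamma^p_k(D)$ for \emph{any} bifiltered chain complex $D$. It therefore suffices to prove the equality $\delta^p_k(C) = \delta^p_k(D)$. Unwinding the definition of $\delta^p_k$, this amounts to showing $\dim\,\ker\,\iota^p_{k-1} = \dim\,\ker\,\eta^p_{k-1}$ and $\dim\,\coker\,\iota^p_k = \dim\,\coker\,\eta^p_k$, where $\iota^p_l$ and $\eta^p_l$ denote the inclusion-induced homology maps $H_l(C^{<p}_*) \to H_l(C^p_*)$ and $H_l(D^{<p}_*) \to H_l(D^p_*)$, respectively.

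First I would exploit the quasi-isomorphism directly: the chain maps $f^q_*$ induce isomorphisms $H_l(C^q_*) \to H_l(D^q_*)$ for every $q$ and every $l$, and by the compatibility diagrams in the definition of quasi-isomorphism these isomorphisms commute with every map induced by an inclusion. In particular, at value $p$ we obtain an isomorphism $\psi^p_l : H_l(C^p_*) \to H_l(D^p_*)$. The key missing ingredient is a matching isomorphism $H_l(C^{<p}_*) \to H_l(D^{<p}_*)$ that is compatible with $\psi^p_l$ through the inclusions $C^{<p}_* \hookrightarrow C^p_*$ and $D^{<p}_* \hookrightarrow D^p_*$. Once this is available, $\iota^p_l$ and $\eta^p_l$ sit in a commutative square whose two vertical arrows are isomorphisms, which forces their kernels and cokernels to have equal dimensions, giving $\delta^p_k(C) = \delta^p_k(D)$.

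To construct the isomorphism on $H_l(C^{<p}_*)$ I would use the decompositions recorded before the theorem, namely $C^{<p}_* = C^{p_1}_* + C^{p_2}_*$ with $C^{p_0}_* = C^{p_1}_* \cap C^{p_2}_*$, and the analogous ones for $D$. These yield Mayer--Vietoris long exact sequences for $C^{<p}_*$ and for $D^{<p}_*$. The quasi-isomorphism provides vertical isomorphisms $H_l(C^q_*) \to H_l(D^q_*)$ for $q \in \{p_0, p_1, p_2\}$ which, by naturality, assemble into a commuting ladder between the two sequences; applying the five-lemma then produces the desired isomorphism $H_l(C^{<p}_*) \to H_l(D^{<p}_*)$.

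The main obstacle is precisely this \emph{naturality}. One must verify that the $f^q_*$ commute not only with the maps induced by the two summand inclusions but also with the Mayer--Vietoris connecting homomorphisms, and, crucially, that the isomorphism output by the five-lemma is genuinely compatible with $\psi^p_l$ under the inclusion into value $p$. Checking this hinges on the observation that the collection $\{f^q_*\}$ forms a morphism of the entire pushout (sum/intersection) diagram, so that the inclusion $C^{<p}_* \hookrightarrow C^p_*$ and the Mayer--Vietoris machinery are respected simultaneously; the $1$-criticality of the bifiltration, used to guarantee $C^{p_0}_* = C^{p_1}_* \cap C^{p_2}_*$, is what makes this diagram well behaved. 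Once $\delta^p_k(C) = \delta^p_k(D)$ is secured, combining it with \eqref{eq:ker-coker} gives $\delta^p_k(C) = \delta^p_k(D) \leq \gamma^p_k(D)$, which is the claim.
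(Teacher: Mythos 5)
Your proposal is correct and follows essentially the same route as the paper's proof: reduce the claim to $\delta^p_k(C)=\delta^p_k(D)$, obtain the needed isomorphism on $H_k(C^{<p}_*)$ via the decomposition $C^{<p}_*=C^{p_1}_*+C^{p_2}_*$ with $C^{p_0}_*=C^{p_1}_*\cap C^{p_2}_*$, a Mayer--Vietoris ladder, and the five-lemma, and then conclude with Equation~\eqref{eq:ker-coker}. The naturality issue you flag as the main obstacle is resolved exactly as you suggest: the paper defines $f^{<p}_*$ directly as a chain map (well-defined because the $f^q_*$ commute with inclusions), builds the ladder from a commutative diagram of short exact sequences of chain complexes, and the five-lemma then merely certifies that the already-induced map $\psi^{<p}_k$ is an isomorphism, so its compatibility with $\psi^p_k$ under the inclusions is automatic rather than something to be checked after the fact.
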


\begin{proof}
% Let $D$ be a bifiltered chain complex satisfying the hypothesis.
Let $f^p_*: C^p_* \rightarrow D^p_*$ be the collection of chain maps ensuring the quasi-isomorphism between $C$ and $D$.
Thanks to their commutativity with the inclusion maps, we have that the chain map $f^{<p}_*: C^{<p}_* \rightarrow D^{<p}_*$ is well-defined.
This ensures for any $p \in \R^2$, the construction of the following commutative diagram
\begin{diagram}
0 &\rTo_{\qquad} & C^{p_0}_k         &\rTo_{\qquad}   & C^{p_1}_k \oplus C^{p_2}_k &\rTo_{\qquad}   & C^{<p}_k &\rTo_{\qquad}   & 0 \\
& & \dTo_{f^{p_0}_k \,}  &           &\dTo_{(f^{p_1}_k, f^{p_2}_k) \,} &           &\dTo_{f^{<p}_k \,} &           & \\
0 &\rTo_{\qquad} & D^{p_0}_k         &\rTo_{\qquad}   & D^{p_1}_k \oplus D^{p_2}_k &\rTo_{\qquad}   & D^{<p}_k &\rTo_{\qquad}   & 0
\end{diagram}
in which each row is exact.
Thanks to the quasi-isomorphism established by $f$ and to the Mayer-Vietoris sequence, we can derive from the previous one the following commutative diagram
\begin{diagram}
H_k(C^{p_0}_*)         &\rTo_{\qquad}   &H_k(C^{p_1}_*) \oplus H_k(C^{p_2}_*) &\rTo_{\qquad}   &H_{k}(C^{<p}_*) &\rTo_{\qquad}   &H_{k-1}(C^{p_0}_*) &\rTo_{\qquad}   &H_{k-1}(C^{p_1}_*) \oplus H_{k-1}(C^{p_2}_*) \\
\dTo_{\psi^{p_0}_k \,}  &           &\dTo_{(\psi^{p_1}_k, \psi^{p_2}_k) \,} &           &\dTo_{\psi^{<p}_k \,} &           &\dTo_{\psi^{p_0}_{k-1} \,} &           &\dTo_{(\psi^{p_1}_{k-1}, \psi^{p_2}_{k-1}) \,}\\
H_k(D^{p_0}_*)         &\rTo_{\qquad}   &H_k(D^{p_1}_*) \oplus H_k(D^{p_2}_*) &\rTo_{\qquad}   &H_{k}(D^{<p}_*) &\rTo_{\qquad}   &H_{k-1}(D^{p_0}_*) &\rTo_{\qquad}   &H_{k-1}(D^{p_1}_*) \oplus H_{k-1}(D^{p_2}_*) \\
\end{diagram}
in which each row is exact and the horizontal and the vertical maps are induced by the corresponding ones between chain complexes.
Since all the other four vertical maps are isomorphisms, the $5$-lemma ensures that map $\psi^{<p}_k$ is also an isomorphism. So, we have the following commutative diagram
\begin{diagram}
H_k(C^{<p}_*)         &\rTo^{\quad \iota^p_k \quad}   &H_k(C^p_*)\\
\dTo_{\psi^{<p}_k \,}  &           &\dTo_{\psi^p_k \,}\\
H_k(D^{<p}_*)         &\rTo^{\quad \eta^p_k \quad}   &H_k(D^p_*)
\end{diagram}
in which vertical maps are isomorphisms.
Then, for any $k$, $\dim\,\ker \, \eta^p_{k}=\dim\,\ker \, \iota^p_{k} $ and $\dim\,\coker \, \eta^p_k=\dim\,\coker \, \iota^p_k$.
So, by applying Equation (\ref{eq:ker-coker}), we have that
\[ \delta^p_k(C) = \dim\,\ker \, \iota^p_{k-1} + \dim\,\coker \, \iota^p_k = \dim\,\ker \, \eta^p_{k-1} + \dim\,\coker \, \eta^p_k \leq \dim\,D^p_k - \dim\,D^{<p}_k = \gamma^p_k(D). \]
\end{proof}

\end{document}